\documentclass{amsart}
\usepackage{todonotes}
\usepackage{amssymb,amsmath}
\usepackage[stable]{footmisc}
\usepackage{xypic}
\usepackage{mathrsfs}

\def\C{\mathfrak C}

\def\T{\bold T}
\def\K{\bold K}
\def\R{\mathbb R}

\def\emptyset{\varnothing}

\def\L{L}

\def\bigland{\bold{\bigwedge}}
\def\biglor{\bold{\bigvee}}

\def\fraisse{Fra\"{\i}ss\'e}

\def\finsubs{ \subseteq_\omega}
\def\x{\bar x}
\def\y{\bar y}
\def\z{\bar z}

\def\isom{\cong}
\newcommand\rest{\mathbin{\!\upharpoonright\!}}

\def\sleq{\preccurlyeq}

\def\a{{\bar a}}
\def\b{{\bar b}}
\def\c{{\bar c}}
\def\v{\bar v}
\def\w{\bar w} 
\def\y{\bar y} 
\def\a{\bar a}
\def\b{\bar b}
\def\c{\bar c} 
\def\d{\bar d}
\def\isom{\approx}

\def\f{\hat f}
\def\m{\bar m}

\let\cl\undefined

\let\gcl\undefined

\DeclareMathOperator{\cl}{cl}
\DeclareMathOperator{\fn}{Fn}

\DeclareMathOperator{\ecl}{ecl}

\DeclareMathOperator{\tp}{tp}
\DeclareMathOperator{\cltp}{cltp}

\DeclareMathOperator{\supp}{supp}

\newtheorem{thm}{Theorem}[section]

\newtheorem{cor}[thm]{Corollary}
\newtheorem{lem}[thm]{Lemma}

\newtheorem{notn}[thm]{Notation}
\newtheorem{conv}[thm]{Convention}
\newtheorem{prop}[thm]{Proposition}

\newtheorem{ex}[thm]{Example}

\theoremstyle{remark}
\newtheorem*{rem}{Remark}

\theoremstyle{definition}
\newtheorem{defn}[thm]{Definition}


\def\Ind{\setbox0=\hbox{$x$}\kern\wd0\hbox to 0pt{\hss$\mid$\hss} \lower.9\ht0\hbox to 0pt{\hss$\smile$\hss}\kern\wd0}

\def\Notind{\setbox0=\hbox{$x$}\kern\wd0\hbox to 0pt{\mathchardef \nn=12854\hss$\nn$\kern1.4\wd0\hss}\hbox to 0pt{\hss$\mid$\hss}\lower.9\ht0 \hbox to 0pt{\hss$\smile$\hss}\kern\wd0}


\title{Full Amalgamation Classes with Intrinsic Transcendentals}
\author{Justin Brody}

\address{Department of Mathematics and Computer Science\\
Goucher College\\
1021 Dulaney Valley Road\\
Baltimore, MD 21204}
\email{justin.brody@goucher.edu}

\begin{document}
\begin{abstract}
  We develop some basic results about full amalgamation classes with
  {\it intrinsic trascendentals}.  These classes have generics whose
  models may have finite subsets whose intrinsic closure is not
  contained in its algebraic clsoure.  We will show that under fairly
  natural conditions the generic will have an essentially undecidable
  theory, but we will also exhibit strictly superstable and strictly
  simple examples.  Separating types over a model into those that are
  {\it intrinsic} and those that are {\it extrinsic}, we will
  demonstrate that the complexity exceeding that of a simple theory in
  the classes with essentially undecidable theories of
  \cite{brody-laskowski} comes from the intrinsic types by deriving a
  class from them which has a strictly simple theory with few
  intrinsic types.
\end{abstract}
\maketitle

\section{Introduction}
\label{sec:intro}
This paper is concerned with the complexity of generic structures
arising from full amalgamation classes with what we term {\it
  intrinsic transcendentals}.  The latter are elements which will be
contained in the closure of a finite subset of a model of the generic
but will not be algebraic over that set.  Specifically, let
$(\K, \leq)$ be a full amalgamation class with intrinsic
transcendentals and let $T$ be the theory of the $(\K, \leq)$-generic.
We will give a combinatiorial condition under which $T$ is essentially
undecidable; this condition will be met by many classes derived from a
pre-dimension function in the usual way.  We will introduce the notion
of a {\it support system}, which is used to measure the complexity of
the ways certain intrinsic extensions can attach to a model.  We will
show that by limiting that complexity we can tame the model-theoretic
complexity of $T$ to a certain degree.  Specifically, for
$M \models T$ we divide the types in $S(M)$ into those that are {\it
  intrinsic} and those that are {\it extrinsic}.  If the class
$(\K, \leq)$ has limited supports, then the number of intrinsic types
over $M$ will be limited.  While the resulting generic may still be
unstable (due to the number of extrinsic types), we will show that in
a particular family of amalgamation classes the number of and
structure of the extrinsic types will not exceed the conditions of
simplicity.  Thus if a class has a generic with a non-simple theory,
this will be due to the number of intrinsic types over models of that
theory.

The history of the Hrushovski construction is well-detailed in many
sources (for example, see \cite{mcl-kuek, baldwin-shi, wagner}) to
which the reader is referred for a full accounting.  The basic idea,
which we will reprise in Section \ref{sec:background}, is that a class
of finite structures $\K$ which are partially ordered by a notion of
strong substructure $\leq$ can be amalgamated to produce a canonical
generic of the class.  The model theory of the generic is determined
by the partial order $\leq$, and by choosing appropriate notions of
strong substructure Hrushovski was able to refute a number of
then-current conjectures in geometric stability theory
\cite{hrushovski-new-sm,hrushovski-stable-pp,hru-acf}.  The
construction continues to play a prominent role in many ongoing
research programmes.

Every instance of the construction is associated with a closure
operator which is determined by $\K$ and
$\leq$.  For many of the studied examples of the construction
\cite{hrushovski-new-sm, baldwin-shi, baldwin-shelah, mcl} the
relation $\leq$
is chosen so that the closure of finite subset of a model of the
theory the generic would be contained in the algebraic closure of that
set.  Pourmahdian \cite{pourmahdian} called this the {\it algebraic
  closure} property (AC) and made a study of certain classes which did
not possess the property.  He showed that performing a kind of partial
Morleyization on the descriptions of the closures of sets yields a
kind of quantifier eliminiation.  He also showed the simplicity of the
resulting theory.  This paper contains somewhat analagous results; our
introduction of closure types in Section \ref{sec:cltp} provides a similar kind
of quantifier eliminiation that does not require an expansion of the
language.  Our result that reducing the number of intrinsic types
results in a simple structure (Theorem \ref{simple}) is similar in flavor to
Pourmahdian's simplicity result.  On the other hand, we note
that a previous paper \cite{brody-laskowski} and more generally the
results of Section \ref{sec:arith} answer Pourmahdian's question of
whether the first-order theory of the generic need be simple (Question
4.10 of \cite{pourmahdian}) in the negative.

In part, the significance of our results is that the methods described
here can potentially produce well-behaved generics with rich
geometries that can shed light on the nature of amalgamation
constructions.  For example, Baldwin has conjectured that no generic
with a small theory will be strictly superstable.  We will exhibit a
non-small strictly superstable generic in section 5, and the
techniques developed here may ultimately shed some light on the main
conjecture.

The organization of the paper is as follows.  In Section
\ref{sec:background} we outline Hrushovski amalgamation construction,
define intrinsic transcendentals, and give examples of generics which
have them.  Section \ref{sec:cltp} introduces a syntatic device for
counting types when automorphism arguments are not available (this
will be the usual case in classes with intrinsic transcendentals,
since the closure of a model will in general have the same power as
the universal domain in which it lives).  Section \ref{sec:arith} will
show that classes with intrinsic transcendentals often have
essentially undecidable theories.  The following section,
\ref{sec:limiting}, gives a condition under which the number of types
represented in the closure of a model is limited.  In Section
\ref{sec:acoll}, we will employ this condition to derive a generic
with a simple theory from a class whose generic has an essentially
undecidable theory.  The main technique in this section will be to
adapt ideas from Hrushovski's collapse to limit the structure of the
intrinsic closure.

I would like to thank a number of people who have been extremely
helpful in bringing this paper to its current form.  Koichiro Ikeda
read through many drafts of the paper and made extremely useful
suggestions (for example, the distinction between intrinsic and
extrinsic types is a result of his comments).  Chris Laskowski pointed
out the need to develop the mechanism in Section \ref{sec:cltp} and
John Baldwin provided invaluable feedback about the definition of
supports in Section \ref{sec:limiting} and the overall presentation of
the material.  I am extremely grateful for all of their efforts.

\section{Background}
\label{sec:background}

\subsection{Hrushovski's Amalgamation Construction}

Hrushovski's amalgamation construction generalizes the \fraisse\
construction to allow for the production of a more general class of
structures.  In particular, rather than amalgamating a class of finite
structures partially ordered by the substructure relation, the
Hrushovski construction amalgamates a class of finite structures
partially ordered by a {\bf strong} (or {\bf closed}) substructure
relation.  This allows for a finer level of control on the resulting
structure.  The choice of the strong substructure relation forces
various model-theoretic properties on the final (generic) structure.
In particular, the relation determines a closure operator on the
generic which in turn determines much of its model-theory.

We present the construction in an arbitrary finite relational language
even though our main examples in this paper will either be graphs or
unary-predicate expansions of graphs.  For any finite relational
language $\L$, we want to work with a class of finite $\L$-structures
$\K$ ordered by a strong substructure relation $\leq$ so that the pair
$(\K, \leq)$ satisfies the axioms in Definition
\ref{sec:background}.\ref{kax} below.  It is worth noting that
our requirements (especially the assumption of full amalgamation) are
significantly stronger than the minimal set of assumptions needed to
carry out the construction.  The assumption of full amalgamation will
allow us to simplify our analyses, and specifically allow the tools
developed in Section \ref{sec:cltp} to work.

\begin{conv}
  For $A, B$ subsets of a common superstructure, $AB$ denotes the
  union of $A$ and $B$ and $A \finsubs B$ denotes that $A$ is a finite
  substructure of $B$.
\end{conv}

\begin{defn}\label{kax} For any finite relational language $\L$, $\K$ a
  class of finite $\L$-structures and $\leq$ a binary relation on $\K$,
  we say that $(\K, \leq)$ is a {\it full amalgamation class} if the
  following properties are satisfied for any $A,B, C \in \K$:

\begin{enumerate}
\item\label{kax:1} The class $\K$ is closed under substructures and
  isomorphisms.
\item\label{kax:4} The relation $\leq$ is isomorphism-invariant: if $A
  \leq B$ and $f: B \isom B'$, then $f(A) \leq B'$.
\item\label{kax:5} $A \leq A$.
\item\label{kax:3} If $A \leq B$, then $A \subseteq B$.
\item\label{kax:2} $\emptyset \leq A$.
\item\label{kax:6} If $A \leq B$ and $B \leq C$, then $A \leq C$.
\item\label{kax:7} If $A \leq B$ and $A \subseteq C \subseteq B$,
  then $A \leq C$.
\item\label{kax:8} For $A \leq B$ from $\K$, we have $A \cap C \leq B
  \cap C$ for every $C \in \K$.
\item\label{kax:9}  The pair $(\K, \leq)$ has the {\it full amalgamation
    property}, discussed below.  
\end{enumerate}
For $A, B \in \K$, if $A \leq B$ then we will say that $A$ is {\it
  strong} (or {\it closed}) in $B$.
\end{defn}

The amalgamation property says that we can coherently join two strong
extensions of a structure $A$ into a new structure $D \in \K$.  Some
form of amalgamation is central to any form of this construction; it
allows us to canonically join elements of $\K$ to produce a unique
countable structure, the $(\K, \leq)$-{\bf generic}.

Specifically, let us say that a map $f: A \rightarrow B$ is {\it
  strong} if $f(A) \leq B$ for $A,B \in \K$.  Then, given $A,B,C \in
\K$ and $f$ as in the diagram below (where $\leq$ above an arrow
indicates that the corresponding map should be strong, and arrows
labeled $i$ are inclusions ), the amalgamation property states that we
can find $D \in \K$ and a strong map $g$ for which the diagram
commutes.

\centerline{ \xymatrix{
    &B \ar@{^{(}.>}_{g}^{\leq}[rd]    \\
    A \ar@{^{(}->}_{i}^{\leq}[ru] \ar@{^{(}->}_{\leq}^{f}[rd] &&D \\
    &C \ar@{^{(}.>}_{\leq}^{i}[ru] }} We will call $D$ an {\it
  amalgam} of $B$ and $C$ over $A$.

We will want to work with classes which have a particularly strong
form of the amalgamation property.  These are the full amalgamation
classes defined by Baldwin and Shi (see \cite{baldwin-shi}).

\begin{defn} Suppose $A,B,C$ are elements of $\K$ with $A = B \cap C$,
  and let $D$ be the structure whose universe is $BC$ and whose
  relations are precisely those of $B$ and those of $C$.  Then we will
  denote $D$ by $B \oplus_{A} C$
  
  \begin{itemize}
  \item If $(\K, \leq)$ is an amalgamation class in which $B
    \oplus_{A} C$ is an amalgam of $B$ and $C$ over $A$, then we will
    call $B \oplus_{A} C$ the {\it free amalgam} of $B$ and $C$ over
    $A$ and say that $(\K, \leq)$ is a {\it free amalgamation class}.

    As notation, if $\{\, B_i : i < n \,\} $ is a family of structures
    with $B_i \cap B_j = A$ for $i \neq j$, then we write
    $\oplus_{i<n}(B_i/A)$ to denote $(\oplus_{i<n-1}(B_i/A)) \oplus_A
    B_{n-1}$ when $n > 2$ ( $\oplus_{i<2}(B_i/A) = B_0 \oplus_A B_1$).

  \item A free amalgamation classes is {\it full} if for $A,B,C\in\K$
    and $A\le B$, $ A\subseteq C$, then $C\le D$, where
    $D=B \oplus_A C$.
  \end{itemize}
\end{defn}


If $N$ is any $\L$-structure whose finite subsets are all in $\K$,
then the closed substructure relation can be extended to arbitrary
subsets of $N$, this in turn determines a closure operator on
$N$.  The definition depends on the notion of a {\it minimal pair}
$(A,B)$:  for $A \finsubs B \in \K$, $(A,B)$ is a minimal pair if $A
\not \leq B$ but for every $A \subseteq B_0 \subsetneq B$, $A \leq
B_0$.  These represent minimal instances of extensions which are not
strong; the intuition is that a (possibly infinite) set will be closed
if it is closed under the operation of extending finite subsets by
minimal pairs.

\begin{defn}
  Let $(\K, \leq)$ be a full amalgamation class and let $N$ be an
  $L$-structure  whose  finite subsets are elements of $\K$.
  \begin{itemize}
  \item If $A \finsubs B \finsubs N$, then $A \leq B$ is determined by
    the $\leq$-relation on $\K$.
  \item If  $A \finsubs M$ (with $M$ possibly infinite),
    then $A \leq M$ exactly when $A \leq B$ for  every $A \subseteq B \finsubs M$.
  \item If $M \subseteq N$, then $M \leq N$ exactly when for
    $A \finsubs M$, if $(A, B)$ is a minimal pair for some
    $B \finsubs N$, then $B \subseteq M$.
  \end{itemize}
\end{defn}

This leads to the  critical notion of a closure:  for $M \subseteq N$
the $N$-closure of $M$, denoted $\cl_N(M)$ is the smallest $M'$ such
that $M \subseteq M' \leq N$.  Our axioms (particularly
\ref{kax}.\ref{kax:8}) will guarantee that this is well-defined.  

The closure operator determines a pre-geometry (without exchange) on
the structures $N$ whose finite substructures are in $\K$; it is largely through determining the properties of the
closure that the Hrushovski construction forces various
model-theoretic properties in the generic, which we now define.  

If $(\K, \leq)$ is a full amalgamation class, then we can imitate the
construction of the \fraisse\ limit (amalgamating over strong
substructures rather than over all substructures) and produce the
$(\K, \leq)$-{\it generic}.  This is a countable $\L$-structure $M$
(unique up to isomorphism) that satisfies the following constitutive
properties.
\begin{enumerate}
\item For $A \finsubs  M$, $A  \in \K$
\item If $A \leq M$ and $A \leq B$, then there is a strong embedding
  of $B$ into $M$ over $A$.
\item For any finite $A \finsubs M$, $\cl_M(A)$ is finite.
\end{enumerate}

We adopt the following conventions.
\begin{conv}\ 
  \begin{itemize}
  \item For $(\K, \leq)$ a full amalgamation class, we will write
    $T_{(\K, \leq)}$ to denote the theory of the $(\K, \leq)$-generic
    and let $\C_{(\K, \leq)}$ denote a universal domain for
    $T_{(\K, \leq)}$.  That is, $\C_{(\K, \leq)}$ is chosen to be a
    $\kappa$-saturated, strongly $\kappa$-homogeneous model of
    $T_{(\K, \leq)}$ for $\kappa$ larger than the size of any model
    we're working with.  If context makes the amalgamation class
    clear, we may simply write $\C$.
  \item We will use $(\K, \leq)$ to denote an arbitrary amalgamation
    class and introduce extra notation (e.g. $(\K_r, \leq_r)$) to
    indicate a specific class.
  \end{itemize}
\end{conv}

\subsection{Intrinsic Transcendentals}

Our main interest in this paper will be in amalgamation classes which
do not have what Pourmahdian \cite{pourmahdian} calls the algebraic closure
property.  Specifically, we will be interested in classes which
satisfy the following condition.
\begin{defn}
  The class $(\K, \leq)$ has {\it intrinsic transcedentals} if there
  is a minimal pair $(A,B)$ from $(\K, \leq)$ such that for any
  $n \in \omega$ there is some $D_n \in \K$ which contains a copy of
  the free amalgam $ \bigoplus_{i < n} B/A$.
\end{defn}

The primary examples of such classes arise from {\it predimension}
functions.  These are functions $\delta:  \K \to \R$ which are used to
define a strong substructure relation $\leq$ on $\K$.  When $\K$ is a
class of graphs, then for any real $\alpha \geq  0$ we will be
particularly interested in the following.

\begin{defn}
  For any graph $A$ and real $\alpha \geq 0$, the {\it predimension of
    $A$}, $\delta_\alpha(A)$ is given by
  $\delta_\alpha(A) = |A| - \alpha e(A)$ where $e(A)$ denotes the
  number of edges in $A$.  For $A \finsubs B$, the {\it relative
    predimension of $B$ over $A$}, is given by
  $\delta_\alpha(B/A) = \delta_\alpha(B) - \delta_\alpha(A)$
\end{defn}

For any such $\alpha$, this gives us two distinct ways to define an
amalgamation class.
\begin{defn}
  Fix a real $\alpha \ge 0$.
  \begin{itemize}
  \item $\K_\alpha$ is the class of all finite graphs which have
    hereditarily non-negative predimension.  That is $\K_\alpha = \{\,
    A : \delta_\alpha(A') \geq 0 \text{ for } A' \subseteq A \,\}$.  For
    $A \finsubs B \in \K_\alpha$, we say that $A \leq_\alpha B$ if for
    $A \subseteq B_0 \subseteq B$, $\delta_\alpha(B_0 / A) \geq 0$.
  \item $\K_\alpha+$ is the class of all finite graphs which have
    hereditarily positive predimension.  That is $\K_\alpha^+ = \{\,
    A : \delta_\alpha(A') > 0 \text{ for } A' \subseteq A, A' \neq \emptyset \,\}$.  For
    $A \finsubs B \in \K_\alpha^+$, we say that $A \sleq_\alpha B$ if for
    $A \subsetneq B_0 \subseteq B$, $\delta_\alpha(B_0 / A) > 0$.
  \end{itemize}
\end{defn}
Note that for irrational $\alpha$ the two classes coincide since
$\delta_\alpha$ is never 0.  For rational $r$, it is shown in
\cite{baldwin-shi} that $T_{(\K_r, \leq_r)}$ is $\omega$-stable while
\cite{brody-laskowski} shows that $T_{(\K_r^+, \sleq_r)}$ (for
$0 < r < 1$) is essentially undecidable.  The primary difference
between these two classes is that in the former class extensions of
relative predimension $0$ are strong and must occur over any finite
set, while in the latter class such extensions can occur or not in
arbitrarily complex configurations.  Such classes form our primary
example of classes with intrinsic transcendentals; in Section
\ref{sec:acoll} we will modify these classes to produce tamer
theories.

\section{The Closure Type}
\label{sec:cltp}
For a given class $(\K, \leq)$, our intuition is that the model theory
of $T_{(\K, \leq)}$ is determined by the complexity of the associated
closure operation.  This is made precise in the $(\K, \leq)$-generic
by noting that isomorphisms of closed subsets extend to automorphisms
of the generic.  This will not extend to the universal domain
$\C_{(\K, \leq)}$ since closed sets will in general have the same
cardinality as $\C$ and we can thus not appeal to strong homogeneity.
We can, however, get a similar result by working with back-and-forth
equivalence rather than automorphisms.  In particular, we will show
that if $X$ and $Y$ are closed subsets of $\C$ which are elementarily
equivalent as $\L$-structures (i.e. $X \equiv Y$) then they are
equivalent as substructures of $\C$ (i.e. $(\C, X) \equiv (\C, Y)$).


Our technical device for showing this is the closure type, which
determines the elementary structure of the closure of a set.
Throughout this section, fix a full amalgamation class $(\K, \leq)$ and
let $\C = \C_{(\K, \leq)}$.  Our main result will be that if finite
sets have the same closure type, then they realize the same complete
type in $\C$.

The closure type is modeled on the game-normal formulae used in the
proof of the \fraisse-Hintikka theorem (see \cite{Hodges}) and makes
fundamental use of a way of decomposing the closure of a set noted by
Baldwin and Shi in \cite{baldwin-shi}.  The latter paper noticed that
the closure of a finite set could be obtained by iteratively adding
minimal pairs to the set.  In particular, they defined
$I_0(\a) = \bigcup \{\, B : (\a_0, B) \text{ a minimal pair}, \a_0
\finsubs \a \,\} $
and
$I_{k+1}(\a) = \bigcup \{\, B : (\a_0, B) \text{ a minimal pair}, \a_0
\finsubs I_{k}(\a) \,\} $.
Then we have $\cl(\a) = \bigcup_{j < \omega} I_j(\a)$: the latter set
is closed because any minimal pair $(A,B)$ with
$A \subseteq \bigcup I_j(\a)$ must have $A \subseteq I_n(\a)$ for some
$n$ and thus $B \subseteq I_{n+1}(\a)$.

Imagine a modified Ehrenfeucht-\fraisse\ game in which two players are
working on the closures of tuples $\a$ and $\b$, and at each round
both players extend the vertices already played by a minimal pair.
Thus in a 1-round game the ``spoiler'' might pick an extension $A$
where $(\a, A)$ is a minimal pair, and it will suffice for the
``duplicator'' to find a copy of $A$ over $\b$.  If playing a 2-round
game the duplicator will need to be more careful about her copy of $A$
-- it will have to be a copy for which she knows that she can respond
appropriately to the next move of the spoiler.  In particular, she
will have to choose $B$ a copy of $A$ for which the minimal pairs over
$\b B$ correspond precisely to those over $\a A$.  If the game will go
for $(k+1)$-rounds, then the duplicator will need to choose a copy of
$A$ which can support all the possible sequences of $k$ moves by the
spoiler.

The idea of an intrinsic formula is to code this information
syntactically.  In particular, a $k$-intrinsic formula over $\a$ will
describe a minimal extension along with the possible combinations of
$k-1$ moves that could be supported by that extension.

Formally, we define a $0$-intrinsic formula over $\a$ to be of
the form
\begin{equation}
  \label{kint-0}
  \phi(\x; \a) := \Delta_B(\a \x)
\end{equation}
where $(\a_0, B)$ is a minimal pair for $\a_0 \subseteq \a$ and
$\Delta_B(\a \x)$ asserts that $\a_0 \x$ is isomorphic to $B$ (note
that the notation is ambiguous -- $\Delta_B(\a \x)$ could refer to
several different formulae depending on which subset of $\a$ is picked
out.  This should not cause any problems in what follows.)

Having defined $k$-intrinic formulae, we define a $k+1$-intrinsic
formula to be of the form
\begin{equation}
  \label{kint-ind}
  \phi(\x; \a) := \Delta_B(\a \x) \land \bigland_{i < m} \exists \w_i
  \phi_i(\w_i; \a \x) \land \bigland_{j < n} \neg \exists \z_j
  \psi_j(\z_j; \a \x)
\end{equation} where again $(\a_0, B)$ is a minimal pair for some $a_0
\subseteq A$ and the
$\phi_i, \psi_j$ are $k$-intrinsic formulae. 

We will call a formula $\phi(\x; \a)$ {\it intrinsic} over $\a$ when
it is $k$-intrinsic over $\a$ for some $k \in \omega$

Intrinsic formulae are meant as approximate descriptions of $\cl(\a)$;
we get a full description by taking the collection of all such
approximations.  This leads us to define the closure type of a
finite tuple $\a$ as the set of all intrinsic formulae $\phi(\x; \w)$
for which $\phi(\x; \a)$ is realized.

\begin{defn}
  \def\w{\bar w}
  For any fixed tuple $\a$, the closure-type of $\a$, denoted
  $\cltp(\a)$, is defined by \[
  \begin{aligned}
    \cltp(\a) = &\{\, \exists \x \phi(\x; \w) : \phi
    \text{ is intrinsic over } \w, \C \models \exists \x \phi(\x; \a) \,\}
    \cup \\ &\{\, \neg \exists \y \psi(\y; \w) : \psi
    \text{ is intrinsic over } \w, \C \models \neg \exists \y \psi(\y;
    \a) \,\} 
  \end{aligned}
  \]
  
  We refer to the formulae in the first set as {\it positive} and those
  in the second as {\it negative}.
  
  For fixed $\a$ and $M$ any set, the closure type of $\a$ over $M$
  is defined by $\cltp(\a / M) = \bigcup_{\m \finsubs M} \cltp(\a \m)$.
\end{defn}

We want to show that for tuples $\a$ and $\b$, if
$\cltp(\a / M) = \cltp(\b/M)$, then
$\tp_{\C}(\a / M) = \tp_\C(\b / M)$.  We will do this by showing that
for $a_0$, there is a $b_0$ with $\cltp(\a a_0) = \cltp(\b b_0)$.
Since this implies that $\a a_0$ and $\b b_0$ are partially
isomorphic, this will allow us to establish back-and-forth
equivalence.  The proof will divide into two cases, based on whether
or not $a_0 \in \cl(\a)$.

\begin{lem}\label{cl-lem}
  Let $\a, \b$ be tuples with $\cltp(\a) = \cltp(\b)$.  Then for any
  $a_0 \in \cl(\a)$ there is a $b_0$ with $\cltp(\a a_0) = \cltp(\b b_0)$.
\end{lem}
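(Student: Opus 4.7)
The plan is to apply saturation of $\C$. Define the partial type
\[
p(y) \;:=\; \{\, \phi(\b, y) : \phi(\w_1,w_2) \in \cltp(\a a_0) \,\}
\]
over $\b$; any realization $b_0$ will give $\cltp(\a a_0) \subseteq \cltp(\b b_0)$, and this one-sided containment alone yields equality, since for every intrinsic $\phi$ exactly one of $\exists\x\phi$ or $\neg\exists\x\phi$ lies in each closure type. Hence it suffices to show $p$ is finitely satisfiable. (The trivial case $a_0 \in \a$ is handled by taking $b_0$ to be the corresponding coordinate of $\b$.)

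Fix a finite $\Sigma = \Sigma_+ \cup \Sigma_- \subseteq \cltp(\a a_0)$, assume $a_0 \notin \a$, choose the least $k$ with $a_0 \in I_k(\a)$, and pick a witnessing chain of minimal pairs $(\a_i, B_i)_{i\le k}$ with $\a_i \subseteq \a\bar z_0\cdots\bar z_{i-1}$, $\bar z_i$ enumerating the new elements of $B_i$, and $a_0 \in \bar z_k$; write $\bar z_k = (y, \bar z_k')$. The core of the argument is to package the chain and all of $\Sigma$ into one intrinsic formula $\Phi(\bar z_0; \a)$ over $\a$. Define, by downward recursion on $i$,
\[
\Phi_k(y,\bar z_k';\,\a\bar z_0\cdots\bar z_{k-1}) \;:=\; \Delta_{B_k}(\a_k y \bar z_k') \,\land\, \bigland_{\sigma \in \Sigma_+\cup\Sigma_-} \sigma(\a, y),
\]
\[
\Phi_i(\bar z_i;\,\a\bar z_0\cdots\bar z_{i-1}) \;:=\; \Delta_{B_i}(\a_i \bar z_i) \,\land\, \exists \bar z_{i+1}\, \Phi_{i+1} \quad (i<k),
\]
and set $\Phi := \Phi_0$. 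Each $\sigma \in \Sigma$ was originally intrinsic over the subtuple $\a y$ and so remains intrinsic over the enlarged parameter at level $k$, so every $\Phi_i$ fits the recursive shape of (\ref{kint-0})--(\ref{kint-ind}) and $\Phi$ is an intrinsic formula over $\a$.

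The original chain witnesses $\C \models \exists \bar z_0\, \Phi(\bar z_0; \a)$, so by $\cltp(\a) = \cltp(\b)$ there exists $\bar z_0^b \in \C$ realizing $\Phi$ over $\b$. Unwinding the nested existentials inside $\Phi$ supplies witnesses $\bar z_1^b,\dots,\bar z_{k-1}^b, y^b, \bar z_k^{b\prime}$ at which the innermost conjunction holds, so every $\sigma \in \Sigma$ is satisfied at $(\b, y^b)$. Taking $b_0 := y^b$ finishes the finite case and, via saturation, proves the lemma.

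The main obstacle is syntactic bookkeeping: each leading $\Delta_{B_i}$ must certify a minimal pair whose base $\a_i$ lies in the growing parameter tuple of $\Phi_i$, and the $\Sigma$-conjuncts appended at the innermost level must be recognized as intrinsic over the enlarged parameter $\a\bar z_0\cdots\bar z_{k-1}\,y\,\bar z_k'$ rather than their original $\a y$. Both requirements reduce to the observation that an intrinsic formula's defining minimal pair uses only a subset of its parameter tuple, a condition preserved under enlarging the parameter; the chain is chosen precisely so that each $\a_i$ lies in the relevant intermediate tuple.
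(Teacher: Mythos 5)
Your proof is correct and follows essentially the same route as the paper's: both transport $\cltp(\a a_0)$ to a partial type over $\b$, establish finite satisfiability by packaging each finite fragment together with the witnessing minimal-pair data into a single intrinsic formula over $\a$ (so that $\cltp(\a)=\cltp(\b)$ applies), and finish by saturation of $\C$. The only differences are cosmetic: the paper first treats the case of a single minimal pair and then iterates along the minimal chain, whereas you fold the whole chain into one nested intrinsic formula, and your explicit observation that one-sided containment of closure types already forces equality is used implicitly in the paper.
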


\begin{proof}

  Suppose first that there is some $A$ with $(\a, A)$ a minimal pair
  and $a_0 \in A$.  Let $p$ be the type 
  \[
  \begin{aligned}
  \{\,  \Delta_A(\b \x)  \,\}
  &\cup \bigcup \{\, \exists \y \phi(\y; \b x_0) : \exists \y \phi(\y;
  \a a_0) \in \cltp(\a a_0)  \,\} \\
  & \cup  \bigcup \{\, \neg \exists \z
  \phi(\z; \b x_0) :  \neg \exists \z \phi(\y;
  \a a_0) \in \cltp(\a a_0)  \,\}
\end{aligned}\]
 where we assume without loss that
  $A$ is enumerated as $a_0 a_1 \ldots a_n$ and $\x$ is $x_0 x_1
  \ldots x_n$.  Note that any finite fragment of $p$ will be implied
  by a single intrinsic formula of the form \[ 
  \Delta_A(\b \x) \land
  \bigland_i \exists \y_i \phi_i(\y_i; \b x_0) \land \bigland_j \neg \exists \z_j
  \phi(\z_j; \b x_0) \]
  Thus $p$ is finitely satisfiable since $\cltp(\b) =
  \cltp(\a)$.  Also, since $\C$ is $\omega$-saturated, $p$ is
  realized by some $b_0 \ldots b_n$.  Then by definition $\cltp(\b b_0)
  = \cltp(\a a_0)$.

  If $a_0$ is not contained in a minimal extension as above, then it
  will be contained in a minimal chain $A_0 \subseteq A_1 \ldots
  \subseteq A_l$ where $(A_i, A_{i+1})$ is a minimal pair.  Thus
  iterating the above argument will suffice for the general case.
\end{proof}

To handle the case where $a_0 \not \in \cl(\a)$, we will want to
employ the following relative notion of closure. This was noted in
\cite{baldwin-shi} and \cite{baldwin-shelah} where it was used to
define semi-genericity.

\begin{defn}
  For $A \subseteq B \subseteq M$, we say that $B$ is closed {\it over
    $A$} (in $M$) if $\cl_M(B) = \cl_M(A) \cup B$.
\end{defn}

The idea is that the any minimal pairs which originate in $B$ actually
originate in $A$, so that extending to $B$ adds no new minimal pairs.

The following lemma was noted by Baldwin and Shi in  \cite{baldwin-shi}

\begin{lem}
  Let $A \leq B \in \K$.  Let $C_1, \ldots, C_m$ be extensions of $B$
  for which $B \not \leq C_i$ but $A \leq A(C_i \setminus B)$.  Then
  for $M \models T_{(\K, \leq)}$, any embedding of $A$
  into $M$ extends to an embedding of $B$ into $M$ which does not
  extend to an embedding of any of the $C_i$.
\end{lem}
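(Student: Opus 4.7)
The plan is to build $B$ into $M$ as a strong substructure via a suitable free amalgam that witnesses $B \leq D$ in $\K$, and then use the hereditary nature of $\leq$ to rule out $C_i$-extensions.

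Set $E_i := A(C_i \setminus B)$ for each $i \leq m$, and form the iterated free amalgam
\[
D := B \oplus_A \bigoplus_{i \leq m} (E_i / A).
\]
From $A \leq E_i$ for each $i$, iterated applications of the fullness axiom together with transitivity of $\leq$ yield $A \leq \bigoplus_{i \leq m}(E_i/A)$; a further application of fullness with this and $A \subseteq B$ gives $B \leq D$, and $A \leq D$ follows by transitivity. Thus $A \leq B \leq D$ in $\K$.

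Using the extension property of the $(\K, \leq)$-generic on $M$, produce a strong embedding of $D$ over $A$ into $M$: if the given embedding of $A$ is itself strong this is immediate, and otherwise one passes through $A' := \cl_M(A) \leq M$ and embeds the further free amalgam $A' \oplus_A D$ (which satisfies $A' \leq A' \oplus_A D$ by fullness, using $A \leq D$ and $A \subseteq A'$) strongly over $A'$. Restricting to $B$ gives the desired embedding, with $B \leq M$ by transitivity through $B \leq D$. Now, if $\sigma : C_i \to M$ extends this embedding of $B$, then since $B \leq M$, for any $B \subseteq X \finsubs M$ we have $B \leq X$; in particular $B \leq \sigma(C_i)$. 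Since $\sigma(C_i) \cong C_i$ and $\leq$ is isomorphism-invariant, $B \leq C_i$, contradicting the hypothesis.

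The main obstacle is the verification of $B \leq D$ via iterated fullness, where the hypotheses $A \leq A(C_i \setminus B)$ are essential. Once the strongness chain $A \leq B \leq D$ is in place, the extension property for the $(\K, \leq)$-generic and the inheritance of $\leq$ to finite supersets in $M$ yield the conclusion directly.
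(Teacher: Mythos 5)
The paper does not prove this lemma (it cites Baldwin--Shi), but your argument has a genuine gap at its load-bearing step: the assertion that the image of $B$ is strong in $M$. Transitivity does not deliver this. You embed $A' \oplus_A D$ strongly over $A' = \cl_M(A)$, which makes the image of $A' \oplus_A D$ strong in $M$; to descend to $B \leq M$ you would need $B \leq A' \oplus_A D$, hence $D \leq A' \oplus_A D$, and fullness gives that only from $A \leq A'$ --- which holds exactly when $A = \cl_M(A)$, i.e.\ when the given embedding of $A$ is already strong. In the case that matters (an arbitrary, non-strong copy of $A$), the conclusion $B \leq M$ is not merely unproved but false: a minimal pair $(A_0, H)$ with $A_0 \subseteq A$ and $H \subseteq \cl_M(A)$, $H \not\subseteq A$, witnesses $B \not\leq BH$ for the freely embedded copy of $B$, since $H$ is disjoint from the image of $B \setminus A$. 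A red flag you should have noticed: if one could always arrange $B \leq M$ over any copy of $A$, then \emph{every} $C_i$ with $B \not\leq C_i$ would be excluded and the hypothesis $A \leq A(C_i \setminus B)$ would be superfluous; relatedly, the paper's Corollary~\ref{full-embed} concludes only that the embedding of $B$ is \emph{closed over} $A$ (no new minimal pairs beyond those of $\cl(A)$), not that $B \leq \C$. Your free amalgam $D$ with the factors $E_i = A(C_i\setminus B)$ is also inert --- nothing in your argument ever uses their presence.

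The hypothesis $A \leq A(C_i \setminus B)$ is precisely what a correct proof must exploit. The standard route: embed $B$ freely over $\cl_M(A)$ so that the image of $\cl_M(A)\,B$ is closed in $M$; if a copy of $C_i$ appeared over the image of $B$, the minimal pair witnessing $B \not\leq C_i$ would have to land inside $\cl_M(A)\,B$, forcing $C_i \setminus B$ to meet $\cl_M(A) \setminus A$, and this contradicts $A \leq A(C_i\setminus B)$ via axiom \ref{kax}.\ref{kax:8} and the construction of $\cl_M(A)$ from minimal chains over $A$. Two secondary points also need attention: $M$ is an arbitrary model of $T_{(\K,\leq)}$, not the generic, so $\cl_M(A)$ may be infinite and the strong extension property is not directly available; one proves the statement in the generic (where $\cl_M(A)$ is finite) and transfers it, noting that for fixed finite $A, B, C_1,\ldots,C_m$ the conclusion is expressible by a first-order sentence.
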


Together with the $\omega$-saturation of $\C$, we have 
\begin{cor}\label{full-embed}
  For any $A \finsubs \C$, if $A \leq B$ then $A$ extends to
  an embedding of $B$ which is closed over $A$.
\end{cor}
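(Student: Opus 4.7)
The plan is to produce the desired embedding by realizing, in the $\omega$-saturated model $\C$, a partial type whose clauses are exactly the configurations that the preceding lemma allows us to avoid.

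Concretely, let $\a$ enumerate $A$ and let $\x$ be fresh variables corresponding to $B\setminus A$. I would consider the partial type $p(\x)$ over $\a$ consisting of the diagram formula $\Delta_B(\a\x)$ (asserting that $\a\x$ is isomorphic to $AB$ via the natural correspondence) together with $\neg\exists\y\,\Delta_C(\a\x\y)$ for every isomorphism type of extension $C\supseteq B$ in $\K$ satisfying the two hypotheses of the preceding lemma, namely $B\not\leq C$ and $A\leq A(C\setminus B)$. Finite satisfiability of $p$ in $\C$ is immediate from the lemma: for any finite choice $C_1,\ldots,C_m$, the inclusion $A\hookrightarrow\C$ extends to an embedding of $B$ that admits no extension to any $C_i$. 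By $\omega$-saturation $p$ is realized, yielding an embedding $f\colon B\to\C$ over $A$ that omits every such $C$.

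It remains to verify that $f(B)$ is closed over $A$ in $\C$, i.e.\ $\cl_\C(f(B))=\cl_\C(A)\cup f(B)$. I would argue by contradiction: if $d\in\cl_\C(f(B))\setminus(\cl_\C(A)\cup f(B))$, use the iterative description $\cl(\cdot)=\bigcup_k I_k(\cdot)$ of Baldwin--Shi to pick $d$ at the smallest stage $k$ lying outside $\cl_\C(A)\cup f(B)$, so that the witnessing minimal pair $(B_0,D)$ in $\C$ has $B_0\subseteq\cl_\C(A)\cup f(B)$ and $d\in D$. To repackage this as a violation of $p$, I would enlarge: let $\hat D$ be a finite subset of $\cl_\C(A\cup D)$ extending $D\cup B_0$ such that $A\leq A\cup(\hat D\setminus f(B))$, built by iteratively adjoining the finitely many failures of strongness over $A$. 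Then the structure $C=f(B)\cup\hat D$, pulled back to an abstract extension of $B$, satisfies $B\not\leq C$ (the original minimal pair survives inside $C$, and axioms \ref{kax}.\ref{kax:7}--\ref{kax}.\ref{kax:8} transfer the failure of strongness to the pulled-back extension) and also $A\leq A(C\setminus B)$ by construction, so its realization over $f(B)$ contradicts the clause of $p$ indexed by $C$.

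The main obstacle I anticipate is this last enlargement step: showing that every non-closure witness can be repackaged as an extension of the precise form forbidden by $p$, simultaneously satisfying $B\not\leq C$ and $A\leq A(C\setminus B)$. Finiteness of $\hat D$ relies on the fact that closures of finite sets are finite in $\C$ (the defining genericity property, which passes to $\C$ via completeness of $T_{(\K,\leq)}$). The remaining ingredients --- formulating $p$, invoking the lemma for finite satisfiability, and applying $\omega$-saturation --- are then routine.
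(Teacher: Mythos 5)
Your overall strategy --- realize, by $\omega$-saturation, a partial type that omits every extension $C\supseteq B$ with $B\not\leq C$ and $A\leq A(C\setminus B)$ --- is exactly what the paper intends (it gives no more detail than ``together with the $\omega$-saturation of $\C$''), and your formulation of $p$ and the appeal to the lemma for finite satisfiability are fine. The genuine gap is precisely at the step you flag, and your proposed repair runs in the wrong direction. By axiom \ref{kax}.\ref{kax:7}, strongness is inherited by intermediate substructures, so contrapositively $A\not\leq X$ implies $A\not\leq X'$ for every $X'\supseteq X$: once some candidate $\hat D$ has $A\not\leq A(\hat D\setminus f(B))$, no enlargement of $\hat D$ can restore the lemma's side condition. ``Iteratively adjoining failures of strongness'' builds a closed set, not one in which $A$ is strong; indeed $A\leq\cl_\C(A)$ fails whenever $A$ is not already closed. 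And the obstruction is unavoidable in your setup when $\cl_\C(A)\supsetneq A$: the base $B_0$ of the offending minimal pair may meet $\cl_\C(A)\setminus f(B)$, so any $C$ witnessing the failure must place points of $\cl_\C(A)\setminus A$ --- and typically a whole minimal extension $Q$ of a subset of $A$ --- into $C\setminus B$. In the motivating classes this forces $\delta\bigl(A(C\setminus B)/A\bigr)\leq\delta(Q/A)\leq 0$, so $A\not\sleq_r A(C\setminus B)$ and that $C$ is simply not on your forbidden list; the type $p$ cannot see this failure of closedness over $A$.

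The standard fix is to reduce first to a closed base. Let $E=\cl_\C(A)$, which is finite, and set $B'=B\oplus_A E$; closure under free amalgamation gives $B'\in\K$, and fullness gives $E\leq B'$. Now run your argument for the pair $E\leq B'$: since $E$ is closed in $\C$, the definition of $\leq$ for infinite extensions gives $E\leq C$ for every finite $E\subseteq C\finsubs\C$, so the side condition $E\leq E(C\setminus B')$ holds for \emph{every} finite extension of $f(B')$ in $\C$. Hence every failure of $f(B')\leq\C$ is witnessed by a forbidden extension, and the realization of the type is closed in $\C$. Restricting to $B$ then yields $\cl_\C(f(B))\subseteq f(B')=\cl_\C(A)\cup f(B)$, and the reverse inclusion is automatic, which is exactly ``closed over $A$''.
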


In order to apply this to the case in which $a_0 \not \in \cl(\a)$, we
show that any finite fragment of $\cltp(\a a_0)$ can be realized by
embedding a finite closed set over $\cl(\b)$.

\begin{lem}
  Suppose $\a, \b$ are finite tuples with $\cltp(\a) = \cltp(\b)$.
  Fix $a_0 \in \C \setminus \cl(\a)$, and let $\Sigma$ be a finite
  fragment of $\cltp(\a a_0)$.  Then there is a {\bf finite} $D
  \subseteq \cl(\a a_0)$ such that if $f$ is an embedding of $D$
  with $f: \a \mapsto \b$ and $f(D)$ closed over $\cl(\b)$,  then
  there some $b_0 \in f(D)$ so that $ \C \models \Sigma(\b b_0)$. 
\end{lem}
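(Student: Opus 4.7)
The plan is to take $b_0 := f(a_0)$ and to build $D$ as a finite subset of $\cl(\a a_0)$ rich enough to witness every positive intrinsic sub-formula that descends from $\Sigma$ down to its maximum intrinsic depth. Let $k$ be the maximum intrinsic depth of the (finitely many) formulae in $\Sigma$. Recursively, for each positive conjunct $\exists\y\,\phi(\y;\w x_0)\in\Sigma$, and for each nested positive existential sub-formula of $\phi$ down to depth $k$, choose one realising tuple in $\cl(\a a_0)$; let $D$ be $\a a_0$ together with all these chosen tuples. Since $\Sigma$ is finite and each $k$-intrinsic formula has a finite tree of sub-formulae, $D$ is a finite subset of $\cl(\a a_0)$ containing $\a a_0$.

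For each positive conjunct $\exists\y\,\phi(\y;\w x_0)\in\Sigma$ with $\phi$ being $k'$-intrinsic ($k'\le k$), its designated witness $\y^*\subseteq D$ satisfies $\phi(\y^*;\a a_0)$ by construction. Because $f$ is an $\L$-embedding of $D$, the top $\Delta_B$ clause of $\phi$ transfers to give $\Delta_B(\b b_0 f(\y^*))$. Nested positive sub-clauses transfer by the same witness-and-embed argument at smaller depth (their witnesses lie in $D$); nested negative sub-clauses transfer by the negative argument below. An induction on intrinsic depth then gives $\phi(f(\y^*);\b b_0)$ in $\C$.

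For each negative conjunct $\neg\exists\z\,\psi(\z;\w x_0)\in\Sigma$ with $\psi$ being $k'$-intrinsic, suppose for contradiction that some $\z^*\in\C$ realises $\psi(\z;\b b_0)$. Since $\psi$ is intrinsic, $\z^*\subseteq\cl(\b b_0)$; and since $\b b_0\subseteq f(D)$ with $f(D)$ closed over $\cl(\b)$, this forces $\z^*\subseteq\cl(\b)\cup f(D)$. Split $\z^*=\z^*_D\sqcup\z^*_B$ with $\z^*_D\subseteq f(D)$ and $\z^*_B\subseteq\cl(\b)\setminus f(D)$. The piece $\z^*_D$ pulls back through $f^{-1}$ to a tuple in $D\subseteq\cl(\a a_0)$; the intrinsic configuration that realises $\z^*_B$ in $\cl(\b)$ corresponds, via $\cltp(\a)=\cltp(\b)$, to a matching configuration realised in $\cl(\a)$. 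Reassembling yields a tuple $\z^{**}$ over $\a a_0$ satisfying $\psi(\z^{**};\a a_0)$ in $\C$, contradicting the negative conjunct.

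The principal obstacle is the coherent reassembly in the negative step when $\z^*$ straddles $f(D)$ and $\cl(\b)\setminus f(D)$: the combinatorial interactions among $\z^*_D$, $\z^*_B$ and $b_0$, and the further nested intrinsic structure they support, must pull back consistently through $f^{-1}$ on one side and through the $\cltp$-equality on the other. The argument therefore runs positive and negative cases in tandem by induction on intrinsic depth, and leans crucially on the choice of $D$: because $D$ absorbs every minimal-pair extension within depth $k$ that is relevant to $\Sigma$, the interface between $f(D)$ and $\cl(\b)$ (which on the $\a$-side is the interface between $D$ and $\cl(\a)$) is fully visible to $f$, so the $f^{-1}$- and $\cltp$-pullbacks can be glued coherently.
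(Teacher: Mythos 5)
Your construction of $D$ and your handling of the positive conjuncts match the first half of the paper's argument (the paper builds $D=E_k$ by iteratively adjoining witnesses for positive existentials), but your treatment of the negative conjuncts is where the proof breaks down, and it is exactly the point at which the paper's construction does something you have omitted. In the paper, for each negated formula $\neg\exists\x\,\nu(\x)$ under consideration, the formula is rewritten as a universal statement over all realizations of its top-level minimal-pair pattern $\Delta_B$, and for each such realization already present in the finite set built so far, a specific \emph{witnessing disjunct} $\theta_i^j$ (either a lower-depth negative formula or a lower-depth positive existential) is chosen and added to the set of formulae to be preserved; the positive ones among these then get witnesses adjoined to $D$ at the next stage. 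This is what makes the transfer of negatives a forward induction on intrinsic depth using only the embedding $f$ and closedness of $f(D)$ over $\cl(\b)$: a potential counterexample on the $\b$-side is defeated by exhibiting, inside $f(D)$, the image of a pre-recorded reason why the corresponding configuration fails on the $\a$-side. Your $D$ contains only witnesses for positive sub-formulae, so even for a hypothetical witness $\z^*$ lying entirely inside $f(D)$ you have no mechanism to refute it: $f$ being an embedding of the finite structure $D$ controls quantifier-free data only, and says nothing about which intrinsic existentials are realized over $f(\z^*)$ in $\C$.

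The deeper problem is your pullback-and-reassemble step for witnesses that straddle $f(D)$ and $\cl(\b)\setminus f(D)$. The piece $\z^*_B$ realizes a configuration involving relations to $b_0$ and to $\z^*_D$, i.e., to elements \emph{outside} $\b$; this is an intrinsic formula over $\b b_0 \z^*_D$, not over $\b$, so the hypothesis $\cltp(\a)=\cltp(\b)$ simply does not apply to it and cannot produce a ``matching configuration'' in $\cl(\a)$. Even granting a matching configuration over $\a$ in isolation, gluing it to $f^{-1}(\z^*_D)$ and $\a a_0$ would have to reproduce the cross-relations between $\z^*_B$ and $\z^*_D b_0$, which no amalgamation available to you preserves; and verifying that the reassembled tuple satisfies the nested clauses of $\psi$ would require running the transfer in the reverse direction, a circularity your ``in tandem'' induction does not break. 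Your closing claim that the choice of $D$ makes the interface ``fully visible'' is asserted, not proved, and is not true of the $D$ you built: collecting witnesses for positive formulae places no constraint whatsoever on where hypothetical witnesses of negated formulae sit on the $\b$-side. The fix is to adopt the paper's device of recording witnessing disjuncts for each negated formula and folding their witnesses into $D$, so that negatives are transferred forward rather than refuted by pullback.
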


\begin{proof}

  \def\c{\bar c} \def\d{\bar d} \def\w{\bar w} \def\v{\bar v}
  \def\x{\bar x} \def\e{\bar e} \def\m{\bar m} 


  We may assume without loss that $\Sigma$ consists of $k$-intrinsic
  formulae for some $k$.  Starting with $\Sigma_0 = \Sigma$ and
  $E_{-1} = \emptyset$, we inductively construct sets of formulae
  $\Sigma_l$ and sets $E_l$ which are realizations of the positive
  part of $\Sigma_l$.  For each positive $\exists \x \pi(\x)$ in
  $\Sigma_l$, we add a tuple $\e_\pi$ to $E_l$ such that
  $\C \models \pi(\e_\pi)$.  We then create $\Sigma_{l+1}$ to ensure
  that $E_k \models \pi(\e_\pi)$.  We do this by adding every positive
  and negative formula in $\pi(\e_\pi)$ to $\Sigma_{l+1}$.  We also
  need to ensure that $E_k \models \neg \exists \x \nu(x)$ when the
  latter is in $\Sigma_l$.  We note that $\neg \exists \x \nu(x)$ will
  be equivalent to
  \[\forall \x \left[\Delta_B(\x \a a_0 \e) \rightarrow \biglor \neg
    \exists \y \phi_\gamma(\y; \x \a a_0 \e) \lor \biglor \exists \w
    \psi_\delta(\w; \x \a a_0 \e) \right] \]
  for some tuple of parameters $\e$..  Let $\b_0, \ldots, \b_l$
  enumerate all realizations of $B$ in $E_l$ (if any).  For each
  $\b_i$ we can choose a formula $\theta_i^j$ which witnesses the
  disjunction (thus, $\theta_i^j$ will be either
  $\neg \exists \y \phi_\gamma$ or $\exists \w \psi_\delta$).  We add
  all such $\theta_i^j$ to $\Sigma_l$ as well.

  Let $D=E_k$ and suppose $f$ is a closed embedding of $D$ which maps
  $\a \mapsto \b$.  Letting $b_0$ denote the image of $a_0$ under this
  embedding, it is straightforward to show that
  $ \C \models \Sigma(\b b_0)$ as desired.

\end{proof}

\begin{lem}\label{main-lem}  
  Suppose $\cltp(\m \a) = \cltp(\m \b)$ for tuples $\m, \a, \b$.  Then
  for any $a_0 \in \C \setminus \cl(\m \a)$ there is a $b_0$ such that
  $\cltp(\m \a a_0) = \cltp(\m \b b_0)$
\end{lem}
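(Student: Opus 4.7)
The plan is to realize the type $q(x_0)$ over $\m\b$ obtained by substituting $\m\b$ for $\m\a$ and the variable $x_0$ for $a_0$ throughout the formulas of $\cltp(\m\a a_0)$. Since $\C$ is $\omega$-saturated, it suffices to show $q$ is finitely satisfiable. Fix a finite fragment $\Sigma \subseteq \cltp(\m\a a_0)$; the preceding lemma supplies a finite $D \subseteq \cl(\m\a a_0)$ such that any embedding $f : D \hookrightarrow \C$ sending $\m\a \mapsto \m\b$ whose image is closed over $\cl(\m\b)$ yields a witness $b_0 := f(a_0)$ realizing $\Sigma(\m\b, x_0)$. The remaining task is to construct such an $f$.

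I would first set $D_0 := D \cap \cl(\m\a)$. Because $\cl(\m\a)$ is closed in $\C$, any minimal pair $(A,B)$ with $A \subseteq D_0$ has $B \subseteq \cl(\m\a)$, so whenever $B \subseteq D$ one has $B \subseteq D_0$; this gives $D_0 \leq D$ in $\K$. Enumerating $D_0 = \{c_1, \ldots, c_p\}$, iterating Lemma \ref{cl-lem} from the hypothesis $\cltp(\m\a) = \cltp(\m\b)$ produces $d_1, \ldots, d_p$ with $\cltp(\m\a c_1 \cdots c_p) = \cltp(\m\b d_1 \cdots d_p)$. Since each $c_i$ lies in $\cl(\m\a)$, the $0$-intrinsic formulas witnessing this force each $d_i \in \cl(\m\b)$, and the isomorphism-type content of the joint closure type makes $f_0 : \m\a D_0 \to \m\b d_1 \cdots d_p$ an $\L$-isomorphism onto its image inside $\cl(\m\b)$.

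The main obstacle is the second stage: extending $f_0$ to $f : D \hookrightarrow \C$ whose image is closed over the (possibly infinite) set $\cl(\m\b)$, whereas Corollary \ref{full-embed} directly delivers only closedness over the finite set $f_0(D_0)$. To bridge this gap I would assemble a partial type $r$ over $\cl(\m\b)$ in variables indexing $D \setminus D_0$, whose formulas assert (i) the quantifier-free type of $D$ extending $f_0$ on $D_0$, and (ii) for every minimal pair $(A,C)$ with $A \subseteq D \cup \cl(\m\b)$ intersecting $D \setminus D_0$ and $C \not\subseteq D \cup \cl(\m\b)$, the non-existence of a realization of $C$ over $A$ outside $D \cup \cl(\m\b)$. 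Finite satisfiability of $r$ reduces, for each finite $F$ with $f_0(D_0) \subseteq F \finsubs \cl(\m\b)$, to embedding $D$ over $F$ while avoiding a finite list of forbidden extensions; this is exactly what the lemma preceding Corollary \ref{full-embed} provides, since $F$ is strong in $D \oplus_{f_0(D_0)} F$ by full amalgamation. $\omega$-saturation of $\C$ then yields $f$, and $b_0 := f(a_0)$ realizes $\Sigma$, completing the finite satisfiability argument.
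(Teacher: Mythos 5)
Your overall architecture is the same as the paper's: reduce to a finite fragment $\Sigma$, invoke the preceding lemma to obtain a finite $D \subseteq \cl(\m\a a_0)$, produce an embedding of $D$ sending $\m\a \mapsto \m\b$ whose image is closed over $\cl(\m\b)$, and finish by compactness over the finite base $\m\b$. Your first stage is also sound and is a useful fleshing-out of a step the paper leaves implicit: $D_0 = D\cap\cl(\m\a)$ is indeed strong in $D$, and iterating Lemma \ref{cl-lem} places a copy of $D_0$ inside $\cl(\m\b)$ with matching closure type.

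The gap is in your second stage. The partial type $r$ has parameter set $\cl(\m\b)$, and in a class with intrinsic transcendentals the closure of even a finite tuple can have the same cardinality as $\C$ itself (this is exactly the point made at the opening of Section \ref{sec:cltp}; concretely, in $(\K_1^+,\sleq_1)$ every neighbour of a vertex $b$ lies in $\cl(b)$, and a vertex of $\C$ may have $|\C|$ neighbours). So no degree of saturation of $\C$, and certainly not $\omega$-saturation, licenses realizing $r$. A second facet of the same problem is that clause (ii) of $r$ --- ``no copy of $C$ over $A$ outside $D\cup\cl(\m\b)$'' --- is not expressible by a set of first-order formulas over those parameters: each formula can exempt only finitely many of the possibly $|\C|$-many permitted copies lying inside $\cl(\m\b)$, so you cannot forbid exactly the external ones. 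The way around both problems, and the route the paper takes, is never to form a type over $\cl(\m\b)$ at all: apply Corollary \ref{full-embed} over the \emph{finite} base $f_0(D_0)$ (legitimate since $f_0(D_0)\leq D$ after identification), obtaining $f$ with $\cl(f(D)) = \cl(f_0(D_0))\cup f(D)$, and use full amalgamation to see that this copy of $D$ sits freely over the remainder of $\cl(\m\b)$, whence $f(D)\cup\cl(\m\b)$ is closed. Replacing your saturation step with that appeal repairs the argument.
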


\begin{proof}
  Let $\Sigma(\m \a a_0)$ be a finite fragment of $\cltp(\m \a b_0)$.
  By the previous lemma, there is a finite extension $E$ of
  $\m \a a_0$ so that for any $b_0$, if $F$ is a closed embedding of
  $E$ over $\m \b d$, then $\Sigma(\m \b b_0)$ will hold.  Since
  $(\K, \leq)$ is a full amalgamation class, we can find a closed
  embedding of $Ec$ over $\m \b$ by Corollary \ref{full-embed}; we let
  $b_0$ correspond to $a_0$ under such an embedding.  Thus we can
  realize every finite fragment of $\cltp(\m \a a_0)$ while fixing
  $\m \b$, so that by compactness we can find a $b_0$ as required.
\end{proof}

\begin{cor}\label{total-cor}
  Suppose $\a,\b$ are tuples with $\cltp(\a) = \cltp(\b)$.  Then for
  any $c_0 \in \C$, there are tuples $\c, \d$ with $c_0 \in \c$ and
  $\cltp(\a \c) = \cltp(\b \d)$
\end{cor}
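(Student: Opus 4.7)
The plan is to dichotomize on whether $c_0$ lies in $\cl(\a)$ and invoke the appropriate one of the two preceding lemmas; the tuple $\c$ can then be taken to be the singleton $(c_0)$, with $\d$ the corresponding singleton produced by the lemma.

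In the first case, $c_0 \in \cl(\a)$, Lemma \ref{cl-lem} (applied directly with $a_0 := c_0$ and the given $\a,\b$) yields some $d_0 \in \C$ with $\cltp(\a c_0) = \cltp(\b d_0)$. In the complementary case $c_0 \notin \cl(\a)$, Lemma \ref{main-lem} applied with $\m$ the empty tuple and $a_0 := c_0$ gives the same conclusion. In either case, setting $\c := (c_0)$ and $\d := (d_0)$ satisfies $c_0 \in \c$ and $\cltp(\a \c) = \cltp(\b \d)$, as required.

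There is no real obstacle here: the two preceding lemmas were engineered to cover complementary cases, the one where the new element is intrinsic over $\a$ and the one where it is not, so the corollary is essentially a packaging statement. All of the technical work, namely the $\omega$-saturation/compactness argument in Lemma \ref{cl-lem} and the closed-embedding step (via Corollary \ref{full-embed}) used inside Lemma \ref{main-lem}, has already been done. The value of consolidating the two cases into a single statement is that downstream back-and-forth arguments, which extend a partial correspondence $\a \mapsto \b$ by an arbitrary element $c_0 \in \C$, can invoke a single uniform extension principle instead of re-splitting on intrinsicity at every step.
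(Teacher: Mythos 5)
Your proof is correct and is exactly the paper's argument: the paper's own proof is simply the one-line remark that the corollary ``is a combination of Lemmas \ref{cl-lem} and \ref{main-lem},'' which is precisely the case split on $c_0 \in \cl(\a)$ that you spell out. Taking $\c$ and $\d$ to be singletons is consistent with the lemmas as stated, so nothing further is needed.
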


\begin{proof}
  This is a combination of Lemmas \ref{cl-lem} and \ref{main-lem}.
\end{proof}

Finally, we prove that elements with the same closure type over a set
have the same type over that set.
\begin{prop}\label{closed-elem}
  Let $M \subseteq \C$ with $|M| < |\C|$.  Let $a, b \in \cl(M)$ and
  suppose $\cltp(a/M) = \cltp(b/M)$.  Then $\tp(a/M) = \tp(b /
  M)$
\end{prop}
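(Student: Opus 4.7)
The plan is to build a back-and-forth system using Corollary \ref{total-cor} and the $\omega$-saturation of $\C$. First I reduce to finite tuples: $\tp(a/M) = \tp(b/M)$ is equivalent to the tuple-type equality $\tp(a\bar m) = \tp(b\bar m)$ (as complete types over $\emptyset$) holding for every finite $\bar m \finsubs M$. Indeed, if these tuple types agree, strong $\kappa$-homogeneity of $\C$ yields an automorphism sending $a\bar m$ coordinatewise to $b\bar m$, which fixes $\bar m$ pointwise and sends $a$ to $b$, so $\tp(a/\bar m) = \tp(b/\bar m)$. On the hypothesis side, the definition of $\cltp(\cdot/M)$ unpacks to $\cltp(a\bar m) = \cltp(b\bar m)$ for each finite $\bar m \finsubs M$. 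It therefore suffices to prove the following general claim: for finite tuples $\a, \b$ in $\C$, $\cltp(\a) = \cltp(\b)$ implies $\tp(\a) = \tp(\b)$.

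To prove the general claim I would induct on the complexity of an $L$-formula $\phi(\bar w)$ and show $\C \models \phi(\a) \Leftrightarrow \phi(\b)$. The quantifier step is the standard back-and-forth move: given $\C \models \exists \bar y\,\psi(\a, \bar y)$ with witness $\bar c$, I would iterate Corollary \ref{total-cor} to produce a tuple $\bar d$ with $\cltp(\a\bar c) = \cltp(\b\bar d)$ and with the coordinates of $\bar c$ and $\bar d$ placed in the desired correspondence; the inductive hypothesis applied to $\psi$ then delivers $\C \models \psi(\b, \bar d)$, so $\exists \bar y\,\psi(\b, \bar y)$. The $\omega$-saturation of $\C$ underpins the realization of $\bar d$ at each step.

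The main obstacle is the atomic base case: that $\cltp(\a) = \cltp(\b)$ already forces $\a$ and $\b$ to have the same quantifier-free type. The $0$-intrinsic formulas $\Delta_B(\a_0 \bar x)$ in $\cltp(\a)$ record the isomorphism type of $\a_0 \bar x$ for each minimal pair $(\a_0, B)$ extending a subtuple $\a_0 \subseteq \a$, and inspection of Lemmas \ref{cl-lem} and \ref{main-lem} shows that each back-and-forth step produces the new element either inside an isomorphic copy of a minimal-pair extension (forced by $\Delta_B$, as in Lemma \ref{cl-lem}) or via a closed embedding from Corollary \ref{full-embed} (as in Lemma \ref{main-lem}), which is automatically an $L$-isomorphism onto its image. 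Hence atomic information is carried along throughout the back-and-forth, and the base case of the induction follows; in the proposition's setting this is especially transparent, since $a, b \in \cl(M)$ ensures each element enters the closure via an explicit minimal pair whose isomorphism type is already matched on both sides.
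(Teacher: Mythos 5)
Your argument is correct and is essentially the paper's proof: both run a back-and-forth on finite tuples of $M$ powered by Corollary \ref{total-cor} (built from Lemmas \ref{cl-lem} and \ref{main-lem}), the only difference being that you unfold the \fraisse-Hintikka theorem into an explicit induction on formula complexity, while the paper establishes $(\C, \m a) \isom_k (\C, \m b)$ for all $k$ and cites that theorem directly. Your care over the atomic base case corresponds to the paper's earlier remark that equality of closure types already yields a partial isomorphism, so nothing is missing.
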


\begin{proof}
  We show that for $\m \finsubs M$, $(\C, \m a) \isom_k (\C, \m b)$
  for any finite $k$, where $\isom_k$ denotes $k$-move back-and-forth
  equivalence.  By \ref{total-cor}, for any tuples $\c, \d$ with
  $\cltp(\m a \c ) = \cltp(\m b \d)$ and $c'$, we can find a $d'$ so
  that $\cltp(\m a \c c') = \cltp(\m b \d d')$.  This implies that
  $(\C, \m a \c c') \isom_{k-1} (\C, \m b \d d')$, and establishes the
  back-and-forth equivalence.  By the \fraisse-Hintikka theorem and
  compactness, this establishes that $\tp(a/M) = \tp(b/M)$.
\end{proof}

\section{Classes which Interpret Arithmetic}
\label{sec:arith}
In Section \ref{sec:background} we introduced the classes
$(\K_r^+, \sleq_r)$ for rational $r$.  These classes have intrinsic
transcendentals and essentially undecidable theories
\cite{brody-laskowski}.  We note in this section that it is the
presence of intrinsic transcendentals in conjunction with an ability
to combine such extensions in arbitrary configurations that leads to
the essential undecidability of the resulting theory.  In Section
\ref{sec:acoll} we derive a class from $(\K_r^+, \sleq_r)$ which will
limit the existence of such configurations and produce a simple theory.

Our result here is simply a generalized restatement of the main
theorem of \cite{brody-laskowski}, which asserts that under certain
conditions the theory of the generic will interpret Robinson's $R$ and
hence be essentially undecidable.

\begin{thm}
  Let $(\K, \leq)$ be a full amalgamation class, and suppose there is
  an intrinsically transcendental pair $(A,B)$ for which there are
  $V \in \K$, $U \subseteq B \setminus A$, and $X \in \K$ such that:
  \begin{enumerate}
  \item $U \oplus_V U' \subseteq X$, where $U' \isom U$.
  \item $(U \oplus_V U', X)$ is an intrinsically transcendental
    biminimal pair (that is, it is a minimal pair and there is no $A
    \subsetneq U \oplus_V U'$ for which $(A, X)$ is minimal pair).
  \item For $ n \in \omega$, $\bigoplus_{i < n} (U_i \oplus U_i') \leq
    \bigoplus_{i < n} (X_i/V)$ where $X_i \isom X$
  \end{enumerate}

  Then the theory of the $(\K, \leq)$-generic interprets Robinson's
  $R$ and is hence essentially undecidable.
\end{thm}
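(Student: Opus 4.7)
The strategy is to adapt the interpretation of Robinson's $R$ from \cite{brody-laskowski} to the abstract hypothesis. The three conditions exactly capture the combinatorial flexibility needed: conditions (1) and (2) produce a definable ``linking'' relation between two distinguished parts of a configuration, while condition (3) ensures that arbitrary finite patterns of such links can coexist in the generic.

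First I would fix a strong copy of $V$ inside the generic to serve as the base parameter for the interpretation. Using the intrinsic transcendental pair $(A,B)$ (after embedding $A$ into the parameter set via $V$) one obtains infinitely many copies of $U$ attached to this $V$; these serve as the point set over which arithmetic will be coded, and they form an infinite family definable via the intrinsic formulas of Section \ref{sec:cltp}.

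Next I would define a binary relation $R(u, u')$ between two such $U$-copies over $V$ by the condition ``there exists a realization of $X$ extending $u \oplus_V u'$''. The biminimal intrinsic transcendental hypothesis on $(U \oplus_V U', X)$ guarantees two essential features: the witnessing copy of $X$ is not algebraic over $u u'$, so the relation is recorded by the closure type rather than by algebraic closure, and the extension genuinely requires both $u$ and $u'$, so $R$ is not degenerate. Definability of $R$ without additional parameters follows from Proposition \ref{closed-elem}, since the closure type over the parameters already determines the complete type.

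Condition (3) then supplies the crucial flexibility: any finite bipartite pattern of $R$-links can be realized simultaneously in the generic, because free amalgams of finitely many copies of $X$ over $V$ remain strong. Following the coding in \cite{brody-laskowski}, this lets the distinguished family of $U$-copies carry definable ternary graphs that code the addition and multiplication tables of $\mathbb{N}$, together with a definable unary predicate picking out the ``standard'' numerals, so that the Robinson axioms reduce to finitely many instantiable patterns. The main obstacle is ensuring that the numeral predicate really is definable and that no unintended $R$-edges appear among numerals; this is handled by pinning down the intended configuration via \emph{negative} intrinsic formulas that assert the absence of spurious $X$-extensions, so that the closure type of a genuine numeral is distinguished from that of any intruder, and Proposition \ref{closed-elem} again promotes this distinction to the first-order theory.
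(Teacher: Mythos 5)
There is a genuine gap, and it is in the heart of the argument: you never supply a mechanism by which addition and multiplication become definable. The paper's coding is a two-level architecture: a natural number $n$ is coded by a copy of $A$ via the \emph{number} of copies of $B$ sitting over it (the intrinsically transcendental pair $(A,B)$ guarantees that arbitrarily many such copies can coexist), and the copies of $V$ serve as codes for \emph{bijections} between the $B$-copies over two different $A$-copies, with the configurations $\bigoplus_{i<n}(X_i/V)$ witnessing the presence or absence of each link of the bijection. Equality of coded numbers, and from there the graphs of $+$ and $\times$, are expressed through these definable matchings. Your proposal inverts this: you fix a single $V$, hang copies of $U$ off it, and define a binary link relation $R(u,u')$ via extensions to $X$. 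Condition (3) then tells you that every finite bipartite pattern of links is realizable --- but that is an independence-property-style statement, not an interpretation of arithmetic. Nothing in your setup counts anything, so the step from ``arbitrary link patterns exist'' to ``definable ternary graphs coding the addition and multiplication tables'' is precisely the content of the theorem and is missing. The roles of $A$ and $B$ (with $U\subseteq B\setminus A$) are reduced in your sketch to producing ``infinitely many copies of $U$,'' which discards the counting level entirely.

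A second, independent problem is your reliance on ``a definable unary predicate picking out the standard numerals.'' Such a predicate does not exist in general (the standard cut is not first-order definable in a model of the generic's theory any more than it is in a nonstandard model of arithmetic), and it is not needed: Robinson's $R$ is chosen exactly because its axioms are schematic over individual numerals, so one only needs, for each standard $n$, a formula defining the numeral $\overline{n}$ (in the paper's coding, ``a copy of $A$ with exactly $n$ copies of $B$ over it''), not a single formula defining standardness. The appeal to negative intrinsic formulas and Proposition \ref{closed-elem} to ``distinguish genuine numerals from intruders'' cannot repair this, since the distinction you want is inherently non-first-order.
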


The details of the proof are omitted here; it is a straightforward
generalization of the special case proved in \cite{brody-laskowski}.
The main ideas are that each copy of $A$ will code a natural number
via the number of copies of $B$ that appear over it.  Copies of $V$
will be codes for bijections between copies of $B$ over different
copies of $A$, and the structures $\bigoplus_{i < n} (X_i/V)$ will
witness the presence or absence of such a bijection.  The structures
$U$ are simply appropriate substructures of $B$ used to define the
bijection; $V$ and $U$ were simply points in \cite{brody-laskowski}.
Using this coding, one can interpret the graphs of addition and
multiplication on the natural numbers.  It is worth noting that the
presence of the appropriate configurations in \cite{brody-laskowski}
relied on the presence of sufficiently large intrinsically
transcendentals extensions and the neutrality of the minimal pair
relation $(A,B)$ to the internal structure of $A$.

\section{Limiting Intrinsic Types}
\label{sec:limiting}

\newcommand{\fcl}{\mathop{\mathrm{scl}^-_M}}
\newcommand{\gcl}{\mathop{\mathrm{scl}^+_M}}
\newcommand{\nbcl}{\mathop{\mathrm{scl}^-_M}}
\newcommand{\pbcl}{\mathop{\mathrm{scl}^+_M}}
\renewcommand{\S}{\mathop{\mathcal{S}}}

In this section we give a condition on classes with intrinsic
transcendentals that will limit the number of intrinsic types, and in
the next section we will give a non-trivial example of such a class.

Our approach will be to associate intrinsic types with {\it supports};
these will be subsets of a model which determine the type.  The
definition of a support will depend on that of an {\it external
  closure}; this will consist of unions of chains of minimal pairs
with a base in $M$ but all other extensions external to $M$.
\begin{defn}
  Let $(\K, \leq)$ be a full amalgamation class, let $M \models
  T_{(\K, \leq)}$ and let $\C = \C_{(\K, \leq)}$.  For $X \subseteq
  M$, we define the external closure $\ecl_M(X)$ as follows.
  \begin{itemize}
  \item Let $J_0$ be the union of all $B \subseteq \C$ with
    $(X',B)$ a minimal pair for some $X' \finsubs X$, and $B \cap M \subseteq X $.
  \item Given $J_i$, let $J_{i+1}$ be the union of all $B
    \subseteq \C$ with  $(X',B)$ a minimal pair for $X' \finsubs
    J_i$ and $B  \cap M \subseteq X$.
  \end{itemize}
  We then define $\ecl_M(X)$ as $X \cup \bigcup_{i \in \omega}
  J_i$.
\end{defn}

\begin{rem}
 Note that for $a \in \ecl_M(X) \setminus M$, there is a minimal chain $X =
 X_0 \subseteq X_1 \subseteq \ldots \subseteq X_k$ with $a \in X_k$
 and $X_i \cap M = X$ for all $i$.
\end{rem}

\begin{defn} Let $(\K, \leq)$ be a full amalgamation class with $\C =
  \C_{(\K, \leq)}$.  Given a model $M \models T_{(\K, \leq)}$, a {\it
    support system for $\cl(M)$ over $M$} is a set $\S$ of subsets of
  $M$ such that for $a \in \cl(M) \setminus M$, there is some $S \in
  \S$ so that $a \in \ecl(S)$.  Such an $S$ is called a {\it support}
  for $a$ over $M$.

  Moreover, 
  \begin{itemize}
  \item $\S$ is {\it bounded by $\mu$} if every $S \in \S$ has cardinality
    less than $\mu$; 
  \item $\S$ has {\it unique supports} if for $a \in \cl(M) \setminus M$ there is
    exactly one $S \in \S$ with $a \in \ecl(S)$.  In such cases $S$
    may be denoted by $\supp_M(a)$;
  \item $\S$ has {\it free closures} if for $S,T \in \S$ \[
    \ecl_M(ST) =
    \ecl_M(S) \oplus_{ST} \ecl_M(T)   \]
  \item $\S$ is {\it edge-closed} if $\S$ has unique supports and for
    $a \in \cl(M) \setminus M, \m \finsubs M$, if $R(a\m)$ holds for
    some relation  $R$ of the language, then $\m \subseteq \supp_M(a)$.
  \end{itemize}
  
  We note that a support system with free closures will also have
  unique supports.  We say that the class $(\K, \leq)$ has {\it
    limiting supports} if there is a cardinal $\lambda$ such that any
  sufficiently large model has a support system which is bounded by
  $\lambda$, has free closures and is edge-closed.
\end{defn}

The thrust of the notion of limiting supports is that for any model
$M$ and $a \in \cl(M) \setminus M$, any interaction between $a$ and $M$ will
mediated by $\supp_M(a)$.  The support of $a$ will thus determine
$\tp(a/M)$.  Since there will be a uniform bound on the number of such
supports, for sufficiently large models $M$ there will be at most
$|M|$ intrinsic types.

The notion of a limiting support system is motivated by the following example.
\begin{ex}\label{acyclic_graphs}
  The class $(\K_1^+, \sleq_1)$ consists of finite acyclic graphs with
  $A \sleq_1 B$ when the connected components of $B \setminus A$ are
  disjoint from the components of $A$ (i.e.
  $B = (B \setminus A) \oplus A$).  For $M$ a model, any vertex
  $a \in \cl(M) \setminus M$ must be contained in a tree which has its
  root vertex in $M$ and all other vertices in $\ecl(M)$.  The root
  vertices can be taken as supports, so that the set of all single
  vertices of $M$ forms a support system for $M$.  In fact this
  defines a limiting support system.  It is not hard to show that
  there are at most $2^{\aleph_0}$ extrinsic types over any model of
  $T_{(\K_1^+, \sleq_1)}$ so by Corollary \ref{stab-spec} below, the
  theory is strictly superstable.\footnote{A published form (see
    \cite{baldwin-easter}) of a conjecture of Baldwin's stated that no
    generic had a strictly superstable theory.  While this example
    provides a counterexample (see \cite{ikeda2012ab} for another); it is clear from the literature
    (e.g. Example 2.35 of \cite{baldwin-shi}) that Baldwin meant to
    deny the existence of a generic with a {\bf small} strictly
    superstable theory.}
\end{ex}
The above example generalizes slightly to any class of graphs in which for $M
\models T_{(\K, \leq)}$ and $a, b \in M$, there are only finitely many
pairwise disjoint paths from $a$ to $b$ in $M$.  
We also present two examples which do not have limiting supports.
\begin{ex}
  Consider the class $(\K_r^+, \sleq_r)$ for rational
  $r$ with $0 < r < 1$ as discussed above.  For $M$ a model
  of $T_{(\K_r^+, \sleq_r)}$, an obvious choice for a
  support system would be the set of finite subsets of $M$.  Such a
  support system would not have free closures or even unique supports.
  In fact, it is not hard to see that no support system for this class
  with bounded supports could have unique supports.  Choose
  $X, Y \in \K^+_r$ so that there are $W,Z \in \K^+_r$ with
  $X \subseteq W$, $Y \subseteq Z$, $X \cap Y = \emptyset$,
  $W \cap Z \neq \emptyset$, and
  $\delta_r(W/X) = 0 = \delta_r(Z/Y)$.  Then (by
  compactness) we can choose a model $M$ with copies of $X, Z$ in
  different supports with intersecting external closures.
\end{ex}
\begin{ex}\label{supp-ex2}
  \def\f{\bold{f}} Consider the class $(\K_f, \sleq)$ where $\K_f$ is
  the class of all finite graphs, and $A \sleq B$ is defined as
  $A \sleq_1 B$ for $A \neq \emptyset$ (that is $\delta_1(B_0/A) > 0$
  for $A \subsetneq B_0 \subseteq B$) and $\emptyset \sleq B$ for
  every $B \in \K_f$.  Then it is straightforward to show that
  $(\K_f, \sleq)$ satisfies the properties
  \ref{kax}.\ref{kax:1}-\ref{kax}.\ref{kax:8}.  It is not a full
  amalgamation class, since for arbitrary $B, C$ we do not have
  $C \sleq_1 B \oplus_\emptyset C$.  It does however satisfy Corollary
  \ref{full-embed}, hence Lemma \ref{closed-elem} holds in this
  context.

  Note that for any finite $A$, if $B$ is the one point extension of
  $A$ that connects the only vertex of $B \setminus A$ to every vertex
  of $A$ then $(A,B)$ will be a minimal pair.  A simple compactness
  argument shows that for any $\kappa < |\C|$, there is a model
  $M_\kappa$ and an $A_\kappa \subseteq M_\kappa$ with $|A| = \kappa =
  |M|$ and a $b$ such that for $a \in A_\kappa$, $(a,b)$ is an edge.
  Since $\tp(b/A)$ is non-algebraic, it has $|\C|$ realizations and
  hence some realizations in $\C \setminus M_\kappa$.  Let $b_\kappa$
  be such a realization; then for any $A_0 \subseteq A_\kappa$ we have
  $b_\kappa \in \ecl(A_0) $.  Thus $b_\kappa$ has $2^\kappa$ different
  supports.

  We note that no support system could be limiting.  If a support
  system $\S$ is bounded by $\lambda$, then for $\kappa > \lambda$
  there will be $\kappa$ supports for $b_\kappa$ in $A_\kappa$ so that
  the supports would not be unique.  Further the system would not be
  edge-closed.
\end{ex}

In what follows, when a class has limiting supports we will implicitly
choose a support system that witnesses this for whatever model $M$ is under
discussion.  Thus when the next lemma chooses a support for an element
over a model, it should be understood that the support is chosen from
a system with the properties witnessing limiting supports.

To show that classes with limiting supports have few intrinsic types,
we will argue that the type of an element over a model is determined
by its support.  The main part of this is contained in the following
lemma.



\begin{lem} Assume that $(\K, \leq)$ is a full amalgamation class with
  limiting supports.  Let $M \models T_{(\K, \leq)}$, let $X \subseteq M$ be a
  support, and fix tuples $\a,\b \finsubs \C$.  Suppose
  \begin{itemize}
  \item $\tp(\a /  X) = \tp(\b / X)$
  \item For every $a \in \a \setminus M$ and every $b \in \b \setminus
    M$,     $\supp_M(a) = X = \supp_M(b)$.
  \item $\a \cap M = \b \cap M$
  \end{itemize}

  Then for $\m \finsubs M$ and $\c$ with $(\a \m, \c)$ a minimal pair,
  there is a tuple $\d$ with
  \begin{enumerate}
  \item $\a \m \c \isom \b \m \d$ as witnessed by some isomorphism $\alpha$
  \item $\tp(\a \c / X) = \tp( \b \d / X) $
  \item For $c \in \c \setminus M$, $\supp_M(c) = X$ if and only if 
    $\alpha(c) \in \d \setminus M$ and $\supp_M(\alpha(c)) = X$.
  \item $\c \cap M = \d \cap M$
  \end{enumerate}
\end{lem}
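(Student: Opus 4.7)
The plan is to realize $\d$ in $\C$ as a tuple satisfying a carefully engineered partial type, combining the full amalgamation property, $\omega$-saturation of $\C$, and the support-system hypotheses.

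A preliminary but pivotal step is to upgrade the hypothesis to $\tp(\a/X\m) = \tp(\b/X\m)$. Writing $\a = \a^M \cup \a^{XC}$ and $\b = \a^M \cup \b^{XC}$ with $\a^M = \b^M$ by hypothesis, the support condition puts $\a^{XC}, \b^{XC}$ in $\ecl_M(X)$. Because the support system is edge-closed, every relation between an element of $\a^{XC}$ and $M$ must land inside $X$; hence every intrinsic formula realized by $\a^{XC}$ with parameters in $X\m\a^M$ is equivalent to one with parameters in $X\a^M$ alone. By (a tuple version of) Proposition \ref{closed-elem} this lifts closure-type equality over $X\a^M$ to full elementary-type equality over $X\m\a^M$, and hence to $\tp(\a/X\m) = \tp(\b/X\m)$.

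With this in hand, let $p(\bar u) = \tp(\c/\a\m X)$ and define $q(\bar u)$ by substituting $\b$ for $\a$ in each formula of $p$. Augment $q$ with: (a) the equations $\bar u_i = c_i$ for each $c_i \in \c \cap M$, so condition 4 is built in automatically; (b) intrinsic-formula constraints in the sense of Section \ref{sec:cltp} forcing $\bar u_j \in \ecl_M(X) \setminus M$ whenever $\supp_M(c_j) = X$; and (c) constraints forcing $\bar u_j \notin \ecl_M(X) \cup M$ for every other $c_j \in \c \setminus M$. The strengthened type equality yields consistency of the base $q$; the free-closures property guarantees that constraints (b) can be satisfied without disturbing the rest of $\cl(M)$, and combined with Corollary \ref{full-embed} and full amalgamation this produces witnesses for each finite fragment of the augmented type inside $\C$. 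By $\omega$-saturation the augmented $q$ is realized by some $\d$, with the isomorphism $\alpha$ being the translation built into $q$ (fixing $\m$ and $\c \cap M$, sending $\a \mapsto \b$, and sending $\c \setminus M$ to the realization of $\bar u$); conditions 1--4 read off directly from the clauses of $q$.

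The main obstacle is the preliminary step: $\tp(\a/X) = \tp(\b/X)$ is strictly weaker than $\tp(\a/X\m) = \tp(\b/X\m)$ in general, and it is only the combination of edge-closed, free closures, and the closure-type machinery of Section \ref{sec:cltp} that allows this to be lifted. A secondary subtlety is coordinating constraints (b) and (c) simultaneously with the elementary-type transfer in $q$; here free-closures is essential, since it ensures $\ecl_M(X)$ is structurally independent of $\ecl_M(Y)$ for $Y \neq X$, so that the transferred elementary formulas cannot force a $\d$-coordinate away from its intended support structure.
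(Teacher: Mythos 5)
Your proposal has a genuine gap, and it stems from never using the hypothesis that $(\a \m, \c)$ is a \emph{minimal pair}. That hypothesis is the engine of the argument: writing $\c_0 = \{\, c \in \c : c \notin M,\ \supp_M(c) = X \,\}$ and $\c_1 = \c \setminus (\c_0 X)$, edge-closedness and free closures show that no relation holds between $\c_0$ and $\c_1$, so $\a\m\c$ splits as a free amalgam over $\a\m\x$ (where $\x = \c \cap X$); minimality then forces $\c = \c_0$ or $\c = \c_1\x$. This dichotomy is what makes the construction tractable. In the first case one translates $\tp(\c/\a X)$ along $\a \mapsto \b$ — consistent directly from the hypothesis $\tp(\a/X) = \tp(\b/X)$, with no need for types over $X\m$ — and picks a realization off $M$ by saturation; in the second case one simply takes $\d = \c$ and checks $\a\m\c \isom \b\m\c$ quantifier-freely. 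Because you skip the dichotomy, you are forced to realize a hybrid type whose coordinates are constrained into $\ecl_M(X)\setminus M$, into $M$, and outside $\ecl_M(X) \cup M$ simultaneously, and the consistency of that augmented type is precisely what is never established: conditions such as ``$\bar u_j \in \ecl_M(X)\setminus M$'' are not first-order and not obviously type-definable, and an appeal to free closures does not by itself show they cohere with the transferred elementary formulas.

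The preliminary step is also unsound as stated. Upgrading $\tp(\a/X) = \tp(\b/X)$ to $\tp(\a/X\m) = \tp(\b/X\m)$ via Proposition \ref{closed-elem} would require $\cltp(\a/X\m) = \cltp(\b/X\m)$, and your justification — that every intrinsic formula with parameters in $X\m$ reduces to one with parameters in $X$ — is exactly the content of that equality, so the argument is circular. Edge-closedness controls \emph{relations} between $\a\setminus M$ and $M$; it does not immediately control which minimal chains originate in subsets of $\a\m$, which is what intrinsic formulae quantify over. Note that the lemma's conclusion (2) only asserts $\tp(\a\c/X) = \tp(\b\d/X)$, so no strengthening over $X\m$ is needed; condition (1) is obtained as a quantifier-free isomorphism from edge-closedness and free closures, not from elementary equivalence over $\m$.
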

\begin{proof}
  
  Let $\c_0 := \{\, c \in \c : c \not \in M, \supp_M(c) = X \,\} $ and
  let $\c_1 = \c \setminus (\c_0 X)$.  Note that for $c \in \c_1$,
  either $\supp(c) \neq X$ or $c \in M$.  In the first case, free
  closures ensures that there is no relation which holds between $c$
  and any element of $\c_0$, while in the latter the same is
  guaranteed by the definition of a support.  Thus, letting
  $\x = \c \cap X$, we have
  $\c = \a \m \x \c_0 \oplus_{\a \m \x} \a \m \x \c_1$.  Since
  $(\a \m, \a \m \c)$ is a minimal pair, we have
  $\a \m \leq \a \m \x \leq \c_0 \oplus_{\a \m \x} \c_1$ if
  $\c_0 \neq \emptyset \neq \c_1$, contradicting the minimality of
  $(\a \m, \a \m \c)$.  Thus $\c = \c_0$ or $\c = \c_1 \x$.

    If $\c = \c_0$, let $q$ be obtained from $tp(\c / \a X)$ by
    mapping $\a$ to $\b$.  Then $q$ is consistent since $\tp(\a / X) =
    \tp( \b / X)$.  By saturation, it has more than $|M|$ realizations
    (since $M$ is algebraically closed).  Choose $\d$ to be a
    realization contained in $\C \setminus M$.
  
    If $\c = \c_1 \x$, then we let $\d = \c$.  We want to show that
    $(\a \m, \c) \isom (\b \m, \c)$.  If $(a, m)$ is an edge with $a
    \in \a \setminus M, m \in \m$, we have that $m \in X$ (since $\S$
    is edge-closed) and similarly for $b \in \b \setminus M$.  Thus we
    have $\a \m \isom \b \m$.  Since no element of $\c_1 \setminus M$
    has support $X$, by free closures there is no relation which holds
    between $\a \setminus M$ and $\c \setminus M$.  Thus $\a \m \c
    \isom \b \m \c$ since $\a \cap M = \b \cap M$.
    
    This establishes the first statement.  In both cases, the
    remaining statements are clear.
\end{proof}

\begin{cor} Assume that $(\K, \leq), M$ and $X$ are  as above.  Fix
  tuples $\a,\b \finsubs \C$.
  Suppose
    \begin{itemize}
    \item $\tp(\a /  X) = \tp(\b / X)$
    \item For $a \in \a \setminus M$ and $b \in \b \setminus M$,
      $\supp(a) = X = \supp(b)$.
    \item $\a \cap M = \b \cap M$
    \end{itemize}  Then
  \begin{enumerate}
  \item\label{cor1} For $\m \finsubs M$ and $\exists \z \phi(\z; \a
    \m) \in \cltp(\a \m)$, we have $\exists \z \phi(\z; \b \m) \in
    \cltp(\b \m)$
  \item\label{cor2} $\cltp(\a/M) =   \cltp(\b / M)$.
  \end{enumerate}
\end{cor}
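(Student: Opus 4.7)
The plan is to prove (1) by induction on the intrinsic complexity $k$ of $\phi$, invoking the preceding lemma as the engine for transferring minimal-pair extensions from the $\a\m$-side to the $\b\m$-side, and then to derive (2) from (1) together with the symmetry of the hypotheses in $\a$ and $\b$.

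For the base case $k=0$, a witness $\c$ of $\exists \x \Delta_B(\a\m\x)$ picks out a copy of the distinguished minimal pair $(\a_0, B)$ with $\a_0 \subseteq \a\m$. I would apply the preceding lemma to produce a tuple $\d$ and an isomorphism $\alpha\colon \a\m\c \to \b\m\d$ so that the image of $\a_0 \c$ under $\alpha$ realizes $\Delta_B(\b\m\d)$, giving $\exists \x \phi(\x; \b\m) \in \cltp(\b\m)$. For the inductive step, a witness $\c$ of the outer $\Delta_B$ clause also witnesses or refutes a finite collection of $k$-intrinsic sub-formulae over $\a\m\c$. Conclusions (2)--(4) of the preceding lemma imply that the three hypotheses of the corollary transfer from $(\a, \b)$ to $(\a\c, \b\d)$, so the induction hypothesis applies to $(\a\c, \b\d)$: the positive sub-clauses transfer directly, and the negated sub-clauses transfer by invoking the same hypothesis with the roles of $\a$ and $\b$ interchanged.

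For (2), since the three hypotheses of the corollary are manifestly symmetric in $\a$ and $\b$, part (1) provides positive-formula inclusions in both directions. Because every intrinsic formula appears in a closure type as either a positive or a negative member, positive-sides matching forces negative-sides to match as well. Varying $\m$ over finite subsets of $M$ then yields $\cltp(\a/M) = \cltp(\b/M)$.

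The step I expect to require the most care is the $k=0$ case when the minimal-pair base $\a_0$ is a proper subset of $\a\m$: the preceding lemma is stated for $\c$ with $(\a\m, \c)$ a minimal pair, while the distinguished minimal pair packaged in $\phi$ lives only over $\a_0$. I would handle this by iterating the preceding lemma through a short minimal chain that first realizes $(\a_0, \a_0\c)$ and then absorbs the remaining elements of $\a\m \setminus \a_0$ one at a time; at each step the support, type-over-$X$, and intersection-with-$M$ conditions must be propagated, but these are exactly the invariants preserved by conclusions (2)--(4) of the lemma, so the iteration goes through.
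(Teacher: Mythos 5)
Your proposal matches the paper's proof: both argue part (1) by induction on the intrinsic rank $k$, with the base case immediate from the preceding lemma and the inductive step using that lemma to produce $\d$ with $(\b\m,\d)\isom(\a\m,\c)$ and conclusions (2)--(4) to re-establish the corollary's hypotheses for $(\a\c,\b\d)$ so the inductive hypothesis applies, and both obtain (2) from (1) by the symmetry of the hypotheses in $\a$ and $\b$. Your added care about the case where the minimal-pair base $\a_0$ is a proper subtuple of $\a\m$ addresses a point the paper's terser proof passes over silently, but it does not change the route of the argument.
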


\begin{proof}

  The second statement is an immediate consequence of the first; we
  prove \eqref{cor1} by induction on $k$, the least number such that
  $\phi$ is $k$-intrinsic.  The case when $k=0$ is immediate from the
  previous lemma.  For the inductive step, fix $\c$ with
  $\phi(\c; \a \m)$.  By the previous lemma, there is some $\d$ with
  $(\b \m, \d) \isom (\a \m, \c)$, $\tp(\a \c / X) = \tp(\b \d / X)$,
  $\c \cap M = \d \cap M$ and every element of $\d \setminus M$ has
  support $X$.  Thus the hypotheses of the corollary are satisfied, so
  by the inductive hypothesis the $(k-1)$-intrinsic formulae realized
  over subsets $\b \d \m$ are precisely those realized over subsets of
  $\a \c \m$.  This establishes that $\phi(\d; \b \m)$ holds.

\end{proof}

\begin{prop}\label{int-types-cond}
  If $(\K, \leq)$ is a full amalgamation class with limiting supports,
  then any sufficiently large model $M$ of the theory of the $(\K,
  \leq)$-generic has $|M|$ intrinsic types.
\end{prop}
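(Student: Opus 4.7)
The plan is to associate to each intrinsic type $\tp(a/M)$ (with $a \in \cl(M) \setminus M$) the pair $(X_a, p_a)$, where $X_a := \supp_M(a)$ is the unique support guaranteed by limiting supports and $p_a := \tp(a/X_a)$ is a complete $1$-type over a set of size less than $\lambda$. If this assignment descends to an injection on the set of intrinsic types, a short cardinal-arithmetic calculation will bound the number of such pairs by $|M|$ whenever $M$ is sufficiently large. The preceding corollary will do essentially all of the model-theoretic work.

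Concretely, I would fix a sufficiently large $M \models T_{(\K, \leq)}$ and choose a support system $\S$ for $\cl(M)$ over $M$ that is bounded by $\lambda$, has free closures (so in particular unique supports), and is edge-closed. Given $a, b \in \cl(M) \setminus M$ with $X_a = X_b =: X$ and $p_a = p_b$, I would apply the preceding corollary to the length-one tuples $\a := (a)$ and $\b := (b)$. All three hypotheses are immediate: $\tp(\a/X) = \tp(\b/X)$ is exactly $p_a = p_b$; every element of $\a \setminus M$ and of $\b \setminus M$ has support $X$ by choice of $X$; and $\a \cap M = \b \cap M = \emptyset$ since $a, b \notin M$. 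The corollary yields $\cltp(a/M) = \cltp(b/M)$, and Proposition \ref{closed-elem} then gives $\tp(a/M) = \tp(b/M)$, as desired.

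The counting step is routine. There are at most $|M|^{<\lambda}$ candidate supports $X \subseteq M$ with $|X| < \lambda$, and for each such $X$ the space $S_1(X)$ has cardinality at most $2^{|\L|+\lambda}$, a constant depending only on $\L$ and $\lambda$. Choosing $|M|$ large enough that $|M|^{<\lambda} = |M| > 2^\lambda$ (for instance, any sufficiently large regular cardinal) makes the product of these two quantities equal to $|M|$, giving the required upper bound on the number of intrinsic types.

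No step is genuinely difficult once the preceding corollary is in hand — the main thing to notice is that each individual element $a \in \cl(M) \setminus M$ fits that corollary's tuple framework as a singleton, so that support-plus-type-over-support determines type-over-$M$. The only technical hypothesis worth double-checking is the condition $|M| < |\C|$ required by Proposition \ref{closed-elem}, which is automatic under the paper's standing convention that $\C$ is $\kappa$-saturated for $\kappa$ larger than any model under consideration.
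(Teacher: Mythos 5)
Your argument is correct and is essentially the paper's own proof: fix a bounded, free-closure, edge-closed support system, apply the preceding corollary to the singletons $a$ and $b$ to get $\cltp(a/M)=\cltp(b/M)$ and hence $\tp(a/M)=\tp(b/M)$ via Proposition~\ref{closed-elem}, then count supports times types over a support. Your counting step is in fact slightly more careful than the paper's (which simply asserts at most $|M|$ choices of support), but the approach is the same.
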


\begin{proof}
  Let $\mu$ be a cardinal so that all supports can be chosen to be of
  cardinality less than $\mu$; fix $M$ a model with cardinality $\kappa
  \geq 2^\mu$.  For $a, b \in \cl(M) \setminus M$ with $\supp_M(a) =
  \supp_M(b)$ and $\tp(a/ \supp_M(a)) = \tp(b/ \supp_M(b))$ , the
  preceding corollary implies that $\cltp(a/M) = \cltp(b/M)$ and hence
  $\tp(a/M) = \tp(b/M)$.  Since there are most $\kappa$ choices for
  $\supp(a)$ and $2^\mu$ choices for $\tp(a/X)$, we have
  at most $\kappa \cdot 2^{\mu} = \kappa$ intrinsic types.  
\end{proof}

\begin{cor}\label{stab-spec}
  If $(\K, \leq)$ is a full amalgamation class with limiting supports, then:
  \begin{enumerate}
  \item If for any $M \models T_{(\K, \leq)}$, 
    $|M| = \kappa \geq 2^{\aleph_0}$ implies that $S(M)$ contains has
    at most $\kappa$ extrinsic types, then $T_{{(\K, \leq)}}$ is
    superstable.
  \item If for any $M \models T_{(\K, \leq)}$,
    $|M| = \kappa \geq 2^{\aleph_0}$ implies that $S(M)$ contains has
    at most $\kappa^{\aleph_0}$ extrinsic types,then the theory of the
    generic is stable.
  \end{enumerate}
\end{cor}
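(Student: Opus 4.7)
The plan is to decompose $S(M)$ into intrinsic and extrinsic types and apply the bounds on each piece. For $p \in S(M)$, call $p$ \emph{intrinsic} if its realizations lie in $\cl(M)$ and \emph{extrinsic} otherwise; this is a well-defined property of the type since $\cl(M)$ is invariant under $\Aut(\C/M)$. Writing $S_i(M)$ and $S_e(M)$ for the corresponding subsets, we have $|S(M)| = |S_i(M)| + |S_e(M)|$, and Proposition \ref{int-types-cond} gives $|S_i(M)| \leq |M|$ whenever $M$ is sufficiently large (say $|M| \geq 2^\mu$, with $\mu$ the cardinal bounding the supports).

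For part (1), fix $\kappa \geq 2^{\aleph_0}$ large enough that Proposition \ref{int-types-cond} applies, and let $|M| = \kappa$. The hypothesis gives $|S_e(M)| \leq \kappa$, so $|S(M)| \leq \kappa + \kappa = \kappa$. Every parameter set $A$ with $|A| \leq \kappa$ extends to such a model by L\"owenheim--Skolem, and since restriction $S(M) \to S(A)$ is surjective we obtain $|S(A)| \leq \kappa$. Thus $T$ is $\kappa$-stable for all sufficiently large $\kappa$, which for a countable theory is the standard characterization of superstability.

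For part (2), take $\kappa = 2^\mu$ (or any larger cardinal meeting the size requirement), and observe $\kappa^{\aleph_0} = 2^{\mu \cdot \aleph_0} = 2^\mu = \kappa$. Combining the hypothesis with Proposition \ref{int-types-cond} gives $|S(M)| \leq \kappa + \kappa^{\aleph_0} = \kappa$ for $|M| = \kappa$, so $T$ is $\kappa$-stable, hence stable. The proof is essentially bookkeeping around Proposition \ref{int-types-cond}; the only mildly delicate points are the well-definedness of the intrinsic/extrinsic dichotomy on $S(M)$ and the passage from type-counts over models to stability for arbitrary parameter sets, and neither presents a serious obstacle.
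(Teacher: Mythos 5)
Your argument is correct and is precisely the (omitted) argument the corollary is meant to summarize: split $S(M)$ into intrinsic and extrinsic types, bound the intrinsic part by Proposition \ref{int-types-cond}, bound the extrinsic part by hypothesis, and read off the stability spectrum. The only point worth tightening is the final step of part (1): ``$\kappa$-stable for all sufficiently large $\kappa$'' is not literally the definition of superstability, but it implies it by Shelah's stability spectrum theorem (a strictly stable countable theory is unstable in every $\kappa$ with $\kappa^{\aleph_0}>\kappa$, and such $\kappa$ are cofinal in the cardinals), so the conclusion stands.
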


\section{The Anti-Collapse}
\label{sec:acoll}

In this section we apply the results of Section \ref{sec:limiting} to
derive a class with few intrinsic types from an arithmetic class.
The generic of this class will have a strictly simple theory.

Fixing $r$ rational from $(0,1)$, we work with the class
$(\K_r^+, \sleq_r)$ discussed in the introduction.  Recall that this
class has intrinsic transcendentals and the generic has an essentially
undecidable theory.  It is straightforward to show that for any $M
\models T_{(\K_r^+, \sleq_r)}$, there are $2^{|M|}$ intrinsic types
over $M$.

We will limit the structure of intrinsically transcendental extensions
to meet the conditions of Proposition \ref{int-types-cond}.  The
procedure is analogous to Hrushovski's collapse (see \cite{wagner}),
but reversed in the following sense.  For $B, M$ any graphs embedded
in a common superstructure, the {\it basis} of $B$ in $M$ is the set
of vertices of $M$ which have an edge to some vertex in $B$.  We
denote it by
$B^M := \{\, m \in M : (m,b) \text{ is an edge for some } b \in B
\,\}$.
Then while the original collapse limited the number of extensions over
a fixed basis with relative predimension $0$, we instead limit the
structure of the possible bases of such extensions.  We thus term our
construction an {\it anti-collapse}.

\begin{notn} We adopt the following notations:
  \begin{itemize}
  \item $\delta$ refers to $\delta_r$.
  \item We extend $\delta$ to all pairs $A,B$ embedded in a common
    superstructure by setting $\delta(B/A) = \delta(AB/A)$.
  \end{itemize}
\end{notn}

The following facts are easily established:
\begin{lem}  For $(\K_r^+, \sleq_r)$ and $\delta$ as above:
  \begin{enumerate}
  \item $(\K_r^+, \sleq_r)$ is a full amalgamation class
  \item  $\delta(\oplus_{i<n} (B_i/A))=\sum_{i<n}
    \delta(B_i/A)$ for $A, B_i \in \K_r^+$
  \item If $A,B,C \in \K_r^+$ are embedded in a common superstructure, then
    $\delta(B/AC)\le \delta(B/A)$
  \item For $A \subseteq B \in \K_r^+$, $\delta(B/A) = |B \setminus A| - r
    e(B,A)$, where $e(B,A)$ denotes the number of edges from $B
    \setminus A$ to $A$
  \end{enumerate}
\end{lem}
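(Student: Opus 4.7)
The four clauses split naturally: (2) and (4) are direct calculations from the definitions, (3) is a form of submodularity for $\delta$, and (1) is the package of amalgamation axioms for $\sleq_r$.

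For (4), I would simply expand $\delta(B/A) = \delta(AB) - \delta(A) = |B \setminus A| - r(e(AB) - e(A))$ and observe that the edges of $AB$ not already in $A$ are exactly those with at least one endpoint in $B \setminus A$, which is what $e(B,A)$ is meant to count. For (2), the free amalgam $\bigoplus_{i<n}(B_i/A)$ identifies the copies of $A$ and carries no edges between $B_i \setminus A$ and $B_j \setminus A$ for $i \ne j$; summing vertex and edge counts termwise yields $\delta(\bigoplus_{i<n}(B_i/A)) - \delta(A) = \sum_i(\delta(B_i) - \delta(A))$.

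For (3), I would first establish the submodular inequality $\delta(X \cup Y) + \delta(X \cap Y) \le \delta(X) + \delta(Y)$: the vertex identity $|X \cup Y| + |X \cap Y| = |X| + |Y|$ is exact, while $e(X \cup Y) + e(X \cap Y) \ge e(X) + e(Y)$ holds because any edge with one endpoint in $X \setminus Y$ and the other in $Y \setminus X$ is counted on the left but not on the right. Since $r \ge 0$, multiplying the edge inequality by $-r$ reverses its direction and gives submodularity. Applying this with $X = AB$ and $Y = AC$, so that $X \cup Y = ABC$ and $X \cap Y = A \cup (B \cap C)$, and rearranging gives $\delta(B/A) - \delta(B/AC) \ge \delta(A \cup (B \cap C)) - \delta(A)$. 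The main obstacle is showing the right side is nonnegative; I would argue that the vertices in $(B \cap C) \setminus A$ sit inside $B \in \K^+_r$, so that their edges back to $A$ are controlled by the hereditary positivity condition on $B$.

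For (1), axioms \ref{kax}.\ref{kax:1}--\ref{kax}.\ref{kax:8} are routine checks from the definitions of $\K^+_r$ and $\sleq_r$. The substantive content is the full amalgamation property. Given $A \sleq_r B$ and $A \subseteq C$ with $B, C \in \K^+_r$, set $D := B \oplus_A C$; I would show both $D \in \K^+_r$ and $C \sleq_r D$. For any nonempty $D_0 \subseteq D$, writing $B_0 = D_0 \cap B$, $C_0 = D_0 \cap C$, $A_0 = D_0 \cap A$, the absence of cross-edges in the free amalgam gives $\delta(D_0) = \delta(B_0) + \delta(C_0) - \delta(A_0)$. If $B_0 \subseteq A$ or $C_0 \subseteq A$, positivity is inherited directly from $C \in \K^+_r$ or $B \in \K^+_r$; otherwise, applying $A \sleq_r B$ to the pair $A \subsetneq A \cup B_0 \subseteq B$ gives $\delta((A \cup B_0)/A) > 0$, an edge-count comparison then shows $\delta(B_0) > \delta(A_0)$, and combining with $\delta(C_0) > 0$ yields $\delta(D_0) > 0$. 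An analogous argument with $D_1 \supsetneq C$ inside $D$ gives $\delta(D_1/C) = \delta(B_1) - \delta(B_1 \cap A) > 0$, establishing $C \sleq_r D$.
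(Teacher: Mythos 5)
The paper offers no argument for this lemma (it is introduced with ``the following facts are easily established''), so there is no proof of record to compare against; I am judging your proposal on its own terms. Your treatments of (1), (2) and (4) are sound: the inclusion--exclusion identity $\delta(D_0)=\delta(B_0)+\delta(C_0)-\delta(A_0)$ for subsets of a free amalgam is the right engine for full amalgamation, and the comparison $\delta(B_0)-\delta(A_0)\ge\delta(AB_0/A)>0$ correctly transfers the hypothesis $A\sleq_r B$ down to $B_0$. (In (4) you silently read $e(B,A)$ as counting all edges of $B$ meeting $B\setminus A$, including those internal to $B\setminus A$, rather than only the edges between $B\setminus A$ and $A$ as the text literally says; that is the reading that makes the identity true, but it is worth flagging.)

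Part (3) is where the proposal breaks. You correctly isolate the obstacle --- after submodularity one is left needing $\delta\bigl(A\cup(B\cap C)\bigr)\ge\delta(A)$, i.e.\ $\delta((B\cap C)/A)\ge 0$ --- but the proposed justification does not work. Hereditary positivity of $B$ controls $\delta(X)$ for subsets $X\subseteq B$; it says nothing about the \emph{relative} quantity $\delta((B\cap C)/A)$, which can be negative: with $r=1/2$, let $A$ be four isolated vertices and let $B=C$ be $A$ together with one new vertex joined to all of $A$. All three structures lie in $\K_{1/2}^+$, yet $\delta((B\cap C)/A)=\delta(B/A)=-1$, and indeed $\delta(B/AC)=0>-1=\delta(B/A)$, so the clause as literally stated fails in this generality. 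The inequality needs the implicit hypothesis $B\cap C\subseteq A$ (the new base elements are disjoint from $B$), which holds in every application in the paper, e.g.\ throughout the proof of Proposition~\ref{bounded1}. Under that hypothesis your submodularity defect $\delta(A\cup(B\cap C))-\delta(A)$ is exactly $0$ and the argument closes; alternatively one argues directly, since then $B\setminus AC=B\setminus A$ and every edge of $AB$ meeting $B\setminus A$ is an edge of $ABC$ meeting $B\setminus AC$, so enlarging the base from $A$ to $AC$ leaves the vertex count unchanged and can only add edges. You should either record this hypothesis or replace the appeal to hereditary positivity with the direct edge count.
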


Note that intrinsically transcendental extensions will correspond to
intrinsic extensions $B \supsetneq A$ with $\delta(B/A) = 0$.  For any
class, a {\it biminimal pair} $(A,B)$ is a minimal pair for which
there is no $A' \subsetneq A$ with $(A', B)$ a minimal pair.  Our
approach will be to limit the copies of $A$ which can give rise to
copies of $B$ -- this will maintain the presence of intrinsic
transcendentals but force a tree-like structure onto the closure and
limit the number of intrinsic types.

Our basic procedure is to fix a parameter $N \in \omega$ and partition
vertices into equivalence classes of size at most $N$.
Intuitively, a set of vertices $A$ whose elements all belong to the
same class can serve as the basis for an arbitrary number of copies of
an intrinsically transcendental extension $B$, while sets which
contain elements of different classes can only serve as the basis for
finitely many such extensions.

To proceed formally, let $L^* = L_{G} \cup \{\, S(x,y) \,\}$ where
$L_{G}$ is the language of graphs; we will
amalgamate classes of $L^*$-structures.  $S(x,y)$ will be used to
define an equivalence relation on the vertices; a set of vertices
whose elements are all in the same class will be referred to as {\it
  $S$-homogeneous}.

\begin{notn}
  Our derived amalgamation class will be based on the following
  parameters.
\begin{itemize}
\item Choose $N \in \omega $ with $N > 1$.  This will determine the
  size of the equivalence classes under $S$.
\item Let $\gamma$ denote the smallest rational number for which
  $\delta(B/A) \sleq -\gamma$ whenever $\delta(B / A) < 0$ (this is a
  simple case of the {\it granularity} in \cite{mcl}; its existence is
  immediate from the rationality of $r$).
\item We choose a function $\mu(X,Y): \K_r^+ \times \K_r^+ \to \R$ to
  be any function with
  \[\mu(X, Y) \ge \frac{2 \left(\delta(X) + \delta(Y)
    \right)}{\gamma}\]
      
\item Our choice of $\mu$ gives rise to $\nu(X): \K_r^+ \to \R$,
  defined by
    $$\nu(X) = \max \{\, \mu(X_0, X_1) : X = X_0 \sqcup X_1 \,\} $$
    where $X_0 \sqcup X_1$ denotes the disjoint union of $X_0$ and
    $X_1$.
\end{itemize}
\end{notn}

We will need to work with a generalization of biminimal pairs which we
call proper $0$-extensions.  These represent minimal extensions of
relative predimension $0$ which cannot be decomposed into independent
extensions.  We need this generalization because simply limiting the
occurrence of biminimal pairs over non-homogeneous bases would still
allow dependencies between the closures of different homogeneous sets;
this would result in a failure to have free closures on the resulting
support system.

\begin{defn} For $X \subseteq Z \in \K_r^+$, we say that $(X,Z)$ is a
  $0$-{\it extension} if there is a minimal chain
  $X = X_0 \subseteq X_1 \subseteq \ldots \subseteq X_k = Z$ with
  $\delta(X_{i+1}/X_i)= 0$.  We will call the extension {\it proper}
  if there is no $X_0 \subsetneq X$ with $(X_0, Z)$ a $0$-extension
  and there are no non-empty $Z_0, Z_1$ with $Z = XZ_0 \oplus_X XZ_1$.
\end{defn}

We note that if $(X,Z)$ is a $0$-extension, then there is no $Y$
with $X \subseteq Y \subseteq Z$ and $\delta(Y/X) < 0$.  Also note
that biminimal $0$-extensions are examples of proper
$0$-extensions.

We seek to limit the appearance of proper $0$-extensions while
preserving the notion of a closed substructure.  Because biminimal
extensions can be created during amalgamation, we cannot require that
every biminimal extension occur over a base which is $S$-homogeneous.
Our obstruction to doing so, however, is only finite and will not
affect the external closures since models are algebraically closed.

\def\ws{*}

\begin{defn}\label{adm}
  We say that an $\L^*$-structure $A$ is {\it admissible} if it
  satisfies the following conditions.
  \begin{enumerate}
  \item\label{ad1} $A \rest L_G \in \K_r^+$
  \item\label{ad2} $S$ defines an equivalence relation on $A$, with
    classes of cardinality less than $N$.
  \item\label{ad3} For $X \subseteq A$, if $X$ is not
    $S$-homogeneous, then for $(X, Y)$ a proper $0$-extension,
    there are at most $\nu(X)$ pairwise disjoint copies of $Y$ over
    $X$ in $A$.  A copy of $Y$ over $X$ is a $Y'$ so that there is
    some isomorphism $f:  Y \to Y'$ which fixes $X$.  This
    condition will restrict the number of intrinsic types.
  \end{enumerate}

  Let $\K_r^{\ws}$ denote the class of all finite admissible
  $L^*$-structures.    We extend $\delta$ to $\K_r^*$ by defining
  $\delta(A) = \delta(A \rest \L_G)$.
\end{defn}

\begin{defn}\label{ads}
  For $A,B \in \K_r^\ws$, we say that $A \sleq^* B$ if
  \begin{enumerate}
  \item\label{ads:1}  $A \subseteq B$ 
  \item\label{ads:2} If $X \subseteq B$ is $S$-homogeneous, then $X
    \subseteq A$ or $X \subseteq B \setminus A$.
  \item\label{ads:3} $\left(A\! \rest\! L_G \right) \sleq \left(B
      \!\rest\! L_G \right)$
  \end{enumerate}
\end{defn}

We want to show that $(\K_r^\ws, \sleq^*)$ is a full amalgamation
class.  The proof relies on Hrushovski's insight in the collapse that
the number of new biminimal pairs which can be created during free
amalgamation is bounded (see \cite{wagner}, pp. 174-5).  The
proposition below and its proof are modifications of his argument (as
presented by Wagner).

\begin{notn}
  If $D = B \oplus_A C$, then for $X \subseteq D$:
  \begin{itemize}
  \item $X_B := X \cap \left(B \setminus A\right)$
  \item $X_A := X \cap A$
  \item $X_{BC} := X \cap \left( BC \setminus A \right) $
  \end{itemize}
\end{notn}

\begin{prop}\label{bounded1}
  Fix $A,B,C \in \K_r^*$ with $A = B \cap C, A \sleq^* B, A \sleq^* C$ and let
  $D = B \oplus_A C$.  For $U \subseteq D$ with $U_B \neq \emptyset
  \neq U_C$, there are at most $\nu(U)$ pairwise disjoint proper
  $0$-extensions of $U$ in $D$
\end{prop}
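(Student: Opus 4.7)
The plan is to adapt the bound from Hrushovski's collapse, as presented in \cite{wagner}, on the number of new proper $0$-extensions produced by a free amalgamation. Let $V_1, \ldots, V_k$ be pairwise disjoint proper $0$-extensions of $U$ in $D$, and for each $i$ set $V_{i,B} = V_i \cap (B \setminus A)$, $V_{i,A} = V_i \cap A$, $V_{i,C} = V_i \cap (C \setminus A)$, with $V_i^B = V_{i,A} \cup V_{i,B}$ and $V_i^C = V_{i,A} \cup V_{i,C}$; also write $\tilde U_B = U_B \cup U_A$, $\tilde U_C = U_C \cup U_A$ and $W = \bigcup_i V_i$. The first key step is to show that each $V_i$ crosses the amalgam, meaning both $V_{i,B} \neq \emptyset$ and $V_{i,C} \neq \emptyset$. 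The cleanest case is $V_i \subseteq B \setminus A$: then no edge in $D$ connects $V_i$ to $U_C$ (every edge lies entirely in $B$ or entirely in $C$), so stripping $U_C$ from each stage of the minimal chain witnessing $(U, UV_i)$ produces a minimal $0$-chain witnessing $(U \setminus U_C, (U \setminus U_C) V_i)$, contradicting the minimality clause of properness since $U_C \neq \emptyset$. The remaining cases with $V_{i,A} \neq \emptyset$ are handled by combining the same stripping idea with the indecomposability clause of properness: any $V_{i,A}$ surviving in $A$ would force a nontrivial decomposition $UV_i = (UV_i^B) \oplus_{UV_i \cap A} (UV_i^C)$ over $U$.

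Next, for each crossing $V_i$ the free-amalgam additivity of $\delta$ gives
\[
0 = \delta(V_i/U) = \delta(V_i^B / \tilde U_B) + \delta(V_i^C / \tilde U_C) - \delta(V_{i,A}/U_A),
\]
and summing over $i$ with submodularity of $\delta$ controls the edges among distinct $V_j$'s (which cannot cross the amalgam). Since $A \sleq B$ and $A \sleq C$, the $\K_r^+$ positivity bounds the total $B$-side drop $\delta(W^B / \tilde U_B)$ below in terms of $\delta(\tilde U_B)$, and symmetrically for $C$; by granularity, each indecomposable crossing $V_i$ forces a drop of at least $\gamma$ on at least one of the two sides. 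Aggregating yields
\[
k \leq \frac{2\bigl(\delta(\tilde U_B) + \delta(U_C)\bigr)}{\gamma} \leq \mu(\tilde U_B, U_C) \leq \nu(U),
\]
using the partition $U = \tilde U_B \sqcup U_C$ in the definition of $\nu$.

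The main obstacle will be the per-extension charge in the previous paragraph: making precise the claim that each proper, indecomposable $V_i$ is responsible for a distinct $\gamma$-sized drop in the combined $B$- and $C$-side predimension budgets, particularly handling the $V_{i,A}$ parts in $A$ which may share vertices or edges across different $i$. This is exactly the kind of careful submodular bookkeeping from Hrushovski's original treatment of the collapse, which must be adapted here to the proper $0$-extension setting in place of ordinary biminimal pairs.
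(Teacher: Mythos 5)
Your proposal correctly identifies the overall strategy---adapt Hrushovski's counting argument from the collapse, charging each extension a predimension drop of at least $\gamma$ against a budget controlled by $\delta(U)$ and the hypotheses $A \sleq^* B$, $A \sleq^* C$---and the shape of your final inequality matches the paper's (the paper fixes a partition $U = X \sqcup X'$ with $X_B \neq \emptyset \neq X'_C$ and shows $k \leq \mu(X,X') \leq \nu(U)$). But there is a genuine gap, in two places. First, your opening reduction---that every proper $0$-extension $V_i$ of $U$ satisfies $V_{i,B} \neq \emptyset \neq V_{i,C}$---is only argued when $V_i \subseteq B \setminus A$. When $V_{i,A} \neq \emptyset$ the stripping argument fails, since vertices of $V_i$ lying in $A$ may have edges into $U_C$; and the decomposition you invoke, $UV_i = (UV_i^B) \oplus_{UV_i \cap A} (UV_i^C)$, is an amalgam over $U_A V_{i,A}$ rather than over $U$, so it does not contradict the indecomposability clause of properness, which only forbids decompositions of the form $Z = XZ_0 \oplus_X XZ_1$. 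More importantly, this is not the dichotomy on which the count turns. The paper instead decomposes each extension $Z_i$ by choosing maximal $Y_i \supseteq X$, $Y_i' \supseteq X'$ inside $Z_i$ with $\delta(Y_i/X) = 0 = \delta(Y_i'/X')$, sets $W_i = Z_i \setminus Y_iY_i'$, and splits into cases according to whether a minimal $0$-extension $V_i$ of $Y_iY_i'$ inside $Z_i$ meets both $B \setminus A$ and $C \setminus A$ or lies on one side; the one-sided case genuinely occurs and requires its own argument.

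Second, the step you defer as ``the main obstacle''---making precise the per-extension $\gamma$-charge---is the entire content of the proposition, so what you have is a plan rather than a proof. In the paper this charge is established by two separate computations: in the crossing case one shows $\delta((V_i)_{BC}/A(Y_iY_i')_{BC}) < 0$, sums over $i$ by submodularity, and plays the total against $0 < \delta((YY')_{BC}Q/A)$ (from $A \sleq^* D$) together with $\delta(YY'/XX') = 0$ to obtain $m \leq (\delta(X)+\delta(X'))/\gamma$; in the one-sided case one uses $W_i^{Y_i} \neq \emptyset$ (which comes from the maximality of $Y_i$) to get $\delta(Y_i/V_iX_BX_A) < 0$ and plays the sum against $\delta(X_BY^*/AC) > 0$, which requires the stronger fact $AC \sleq^* ABC$ rather than just $A \sleq^* D$. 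Neither computation is a direct transcription of the classical collapse bound, because the objects counted here are proper $0$-extensions rather than biminimal pairs; you need to carry out this bookkeeping explicitly for the proposition to be proved.
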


\begin{proof}
  It suffices to show that for $U = X \sqcup X'$ with
  $X_B \neq \emptyset \neq X'_C$, if $\{\, (XX', Z_i) : i < k \,\} $
  is a sequence of proper $0$-extensions which are pairwise disjoint
  over $XX'$, then $k < \mu(X, X')$.  For each $i < k$, we want to
  replace $X, X'$ with $Y_i, Y_i'$ so that $(Y_i Y_i', Z_i)$ is a
  proper $0$-extension which is minimal in the sense that there are no
  ${\hat Y_i}, {\hat Y_i'}$ extensions of $Y_i, Y_i'$ for which
  $({\hat Y_i} {\hat Y_i'}, Z_i)$ is a proper $0$-extension.  We
  choose $Y_i, Y_i' \subseteq Z_i$ to be maximal extensions of $X$ and
  $X'$ so that $\delta(Y_i / X) = 0 = \delta(Y_i' / X')$ and
  $Y_i, Y_i'$ are disjoint over $X, X'$.

  Letting $W_i = Z_i \setminus Y_iY_i'$, the following are straightforward
  \begin{itemize}
  \item  $W_i \neq \emptyset$
  \item  $\delta(W_i / Y_iY_i') = 0$ 
  \item  $W_i^{Y_i} \neq \emptyset \neq  W_i^{Y_i'}$
  \item  $Y_i^{XX'} \subseteq X$, $Y_i'^{XX'} \subseteq X'$
  \end{itemize}

  \bigskip {\bf Case 1.}  Renumbering as needed, let $Z_1 \ldots Z_m$
  be such that there is $V_i \subseteq
  Z_i$ with $(Y_iY_i', V_i)$ a minimal $0$-extension and $(V_i)_B \neq
  \emptyset \neq (V_i)_C$.  We will show that $\delta( (V_i)_{BC} / A
  (Y_i Y_i')_{BC}) < 0$; since $A \sleq^* BC$ this will limit the number
  of possible such $V_i$ and hence limit $m$.

  It is easily established that $(V_i)_A \neq \emptyset$ and
  $V_i^{Y_i} \neq \emptyset \neq V_i^{Y_i'}$ (the latter fact comes
  from our choice of $Y_i, Y_i'$).  By minimality,
  $\delta((V_i)_{BC} / Y_iY_i'(V_i)_A) < 0$ so $\delta((V_i)_{BC} / A
  (Y_i Y_i')_{BC}) < 0$ as well.

  Let $YY' := \bigcup_{i=1}^m Y_i Y_i'$ and let $Q = \bigcup_{i=1}^m
  (V_i)_{BC}$.  Then note that \[
  \begin{aligned}
    \delta( Q / A (YY')_{BC}) & \leq \sum_i
    \delta\Bigl((V_i)_{BC} / A (YY')_{BC}\Bigr)\\ &\le \sum_i \delta\Bigl((V_i)_{BC}
    / A (Y_i Y_i')_{BC}\Bigr) \\
    &\leq -\gamma m
  \end{aligned}
  \]    
  Since $A \sleq^* D$ we have
  $$
  \begin{aligned}
    0 &< \delta\left( (YY')_{BC} Q/ A\right)\\
    &=  \delta\left( Q / A (Y Y')_{BC}\right) + \delta\left((YY')_{BC} / A\right) \\
    & \leq - \gamma m   + \delta((Y Y')_{BC} / A)\\
    &\leq - \gamma m  +  \delta((Y Y')_{BC} / (XX')_A (YY')_A) \\
    &= - \gamma m + \delta(XX') - \delta(X_A X_A' Y_A Y_A') \\
    &\leq -\gamma m + \delta(X) + \delta(X')
  \end{aligned}
  $$ where the final equality holds since $\delta(YY' / XX') = 0$.  
  Thus $m < \frac{\delta(X) + \delta(X')}{\gamma}$

  \def\sleqp{\sleq^*} \def\ya{Z_A}
  \def\yb{Z_B} \def\yc{Z_C}
  
  \bigskip
  {\bf Case 2.}  
  We show that $k - m \leq \frac{\delta(X) + \delta(X')}{\gamma}$.
  For $i > m$, every minimal $0$-extension $(Y_i Y_i', V_i)$ satisfies
  $(V_i)_B$ or $(V_i)_C$ empty.  Let $Z_{m+1}, \ldots, Z_l$ denote
  those copies with $(V_i)_B = \emptyset$.  Since $W_i^{Y_i} \neq
  \emptyset$, we can choose $V_i$ so that $(V_i)^{Y_i} \neq
  \emptyset$.  Thus we have $\delta(Y_i / V_i X_B X_A) < \delta(Y_i /
  X_B X_A) = \delta(Y_i / X) = 0$.  Letting $Y^* = \bigcup_{i=m+1}^l
  Y_i$ and letting $V^* = \bigcup_{i=m+1}^l V_i$ we have the following
  inequalities:
  $$
  \begin{aligned}
    \delta(X_B Y^* / AC)  - \delta(X_B Y^* / X_A) &\leq \delta(X_B Y^* / V^* X_A)  - \delta(X_B Y^* / X_A) \\
    &= \delta(Y^* / V^*  X) + \delta(X_B / V^* X_A)  - \delta(X_B / X_A) \\
    &\leq \delta(Y^* / V^* X) + \delta(X_B / X_A) - \delta(X_B /
    X_A) \\
    &\leq \sum_{i = m+1}^l \delta(Y_i / V^* X)  \\
    &\leq -\gamma(l-m)
  \end{aligned}
  $$  
  Since $AC \sleq^* ABC$, we have $\delta(X_B Y^* / AC) > 0$.  Thus
  $$
  \begin{aligned}
    l-m &\leq \frac{\delta(X_B Y^* / X_A) - \delta(X_B Y^* /AC) }{\gamma}
    \\
    &\leq \frac{\delta(X_B Y^* / X_A) }{\gamma} \\
    &\leq \frac{ \delta(X)}{\gamma}
  \end{aligned}
  $$   The same reasoning shows that $k - l \leq
  \frac{\delta(X')}{\gamma}$.  

  \bigskip
  
  We thus have $k \leq \mu(X,X')$ as desired.
\end{proof}

\begin{prop}
  $(\K_r^*, \sleq^*)$ is a full amalgamation class which satisfies
  \ref{kax}.\ref{kax:1} through \ref{kax}.\ref{kax:8}.
\end{prop}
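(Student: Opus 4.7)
The plan is to verify axioms (\ref{kax:1})--(\ref{kax:8}) of Definition~\ref{kax} largely routinely, and then devote the bulk of the argument to the full amalgamation property. The graph-reduct condition of admissibility (Definition~\ref{adm}(\ref{ad1})) transfers directly from the corresponding properties of $(\K_r^+, \sleq_r)$; the equivalence-relation condition with bounded classes (\ref{ad2}) is absolute; and condition (\ref{ad3}) descends to substructures because any copy of a proper $0$-extension in a substructure is also a copy in the ambient structure. The clauses governing $\sleq^*$ itself follow once one observes, from $A \sleq^* B$, that no $S$-homogeneous subset of $B$ can straddle $A$ and $B \setminus A$; this single fact drives the $S$-homogeneity halves of transitivity, the intermediate property, and the intersection property.

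For the amalgamation, fix $A, B, C \in \K_r^\ws$ with $A \sleq^* B$ and $A \subseteq C$, and set $D = B \oplus_A C$. I would first establish admissibility of $D$. The graph-reduct assertion $D \rest L_G \in \K_r^+$ together with $(C \rest L_G) \sleq_r (D \rest L_G)$ is immediate from the full free amalgamation of $(\K_r^+, \sleq_r)$. The $S$-relation on $D$ is the common extension of those on $B$ and $C$, and each of its classes is either an $S$-class of $C$ (possibly an $A$-class that $C$ has extended into $C \setminus A$) or an $S$-class of $B$ disjoint from $A$; in neither case can the class exceed size $< N$. The substantive step is verifying (\ref{ad3}) for $D$: given a non-$S$-homogeneous base $X \subseteq D$ and pairwise disjoint proper $0$-extensions $(X, Y_1), \ldots, (X, Y_k)$ in $D$, one first observes that for each $i$ either $Y_i \setminus X \subseteq B$ or $Y_i \setminus X \subseteq C$. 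Indeed, a split across both sides would, thanks to the absence of cross-edges in the free amalgam, produce a free decomposition $Y_i = X (Y_i \cap B) \oplus_X X (Y_i \cap C)$ with both pieces non-empty, contradicting the properness of $(X, Y_i)$.

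From here the case analysis proceeds as follows. When $X$ straddles the amalgam, i.e.\ $X \cap (B \setminus A)$ and $X \cap (C \setminus A)$ are both non-empty, Proposition~\ref{bounded1} applies directly and bounds $k$ by $\nu(X)$. When $X \subseteq B$, the ``$B$-side'' copies (those with $Y_i \setminus X \subseteq B$) are bounded by admissibility of $B$; for a ``$C$-side'' copy, the $B \setminus A$ part of $X$ has no edges to $Y_i \setminus X$, and a chain argument using $\delta(Y_i / (X \cap A)) = \delta(Y_i / X) = 0$ shows that $(X \cap A, Y_i)$ is itself a $0$-extension, so properness forces $X \subseteq A$ and admissibility of $C$ supplies the bound. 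The case $X \subseteq C$ is symmetric. Finally, $C \sleq^* D$ reduces to clause (\ref{ads:2}), which holds because $D \setminus C = B \setminus A$ and $A \sleq^* B$ prevents any $S$-class of $B$ from straddling this division. I expect the main obstacle to be the ``escape'' scenario in the unsplit cases, where a proper $0$-extension over a base on one side of the amalgam reaches across to the other; pinning down why properness forces the effective base into $A$ is the delicate combinatorial step, and it is precisely the role of proper $0$-extensions (rather than merely biminimal pairs) in Definition~\ref{adm} to make this reduction available.
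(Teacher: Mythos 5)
Your overall strategy matches the paper's: the axioms of \ref{kax} are routine, admissibility of the free amalgam is the real content, the straddling case is delegated to Proposition~\ref{bounded1}, and one-sided bases are handled by the admissibility of $B$ or $C$. You are in fact more explicit than the paper about the ``escape'' cases (a base in one factor whose proper $0$-extensions reach into the other), which the paper compresses into a single ``without loss $X_B \neq \emptyset \neq X_C$'' in the free case and, in the full case, into the one-line remark that since $A \sleq^* B$ every proper $0$-extension of a subset of $C$ stays inside $C$.

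The gap is in your unified setup. You fix only $A \sleq^* B$ and $A \subseteq C$ and then, for a straddling base $X$, invoke Proposition~\ref{bounded1} ``directly.'' But that proposition assumes $A \sleq^* B$ \emph{and} $A \sleq^* C$, and its proof genuinely uses both: Case~1 needs $A \sleq^* D$, which fails when $A$ is not strong in $C$, and the symmetric half of Case~2 needs $B \sleq^* D$ (i.e.\ $AB \sleq^* ABC$), which again requires $A \sleq_r C$. So in the full-amalgamation configuration the straddling case is not covered by the tool you cite. The paper sidesteps this by running the two verifications separately: closure under free amalgams (both maps strong) uses Proposition~\ref{bounded1}, while the full property (only $A \subseteq C$) is reduced to showing that proper $0$-extensions of subsets of $C$ lie inside $C$, so only $C$'s own admissibility is needed and straddling bases never arise for that check. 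You should adopt that split, or else prove a variant of Proposition~\ref{bounded1} under the weaker hypotheses. A second, smaller wrinkle: your ``no splitting'' step derives a decomposition $Y_i = X(Y_i\cap B)\oplus_X X(Y_i\cap C)$, but when $(Y_i\setminus X)\cap A \neq \emptyset$ the two halves are actually glued along $X\bigl((Y_i\setminus X)\cap A\bigr)$ rather than along $X$, so the contradiction with properness (which only forbids free decompositions over $X$ itself) needs an additional argument.
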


\begin{proof}
  We need to show that $(\K_r^*, \sleq^*)$ is closed under free
  amalgamation and has full amalgamation in addition to satisfying the
  axioms of \ref{kax}.  

  We first show that $(\K_r, \sleq^*)$ is closed under free
  amalgamation.  Fix $A,B,C \in \K_r^*$ with $A = B \cap C$, $A \sleq^*
  B, A \sleq^* C$.  Letting $D = B \oplus_A C$, we first have to show that
  $D \in \K_r^*$ and that $B \sleq^* D$, $C \sleq^* D$.

  Of the properties defining admissibility, only \ref{ads}.\ref{ads:3}
  needs comment (\ref{adm}.\ref{ad2} holds by \ref{ads}.\ref{ads:2}).
  Fix $X' \subseteq D$ with $X$ not $S$-homogeneous, and let
  $(X, Y)$ be a proper $0$-extension.  Since $B,C \in \K_r^*$, we may
  assume without loss that $X_B$ and $X_C$ are both non-empty.
  Then Proposition \ref{bounded1} implies that there are most
  $\nu(X)$ copies of $Y$ over  $X$ in $D$.  That $B \sleq^* D$ and $C
  \sleq^* D$ are straightforward

  For full amalgamation, we have to show that if $A, B,C,D$ are as
  above with the exception that we only require $A \subseteq C$, then
  $C \sleq^* D$.  We know that $C \rest L_G \sleq_r D \rest L_G$ since
  $(\K_r^+, \sleq_r)$ is a full amalgamation class.  Thus it suffices
  to show that for $X \subseteq C$, if $X$ is not $S$-homogeneous and
  $(X, Y)$ is a proper $0$-extension, then there are fewer than
  $\nu(X)$ copies of $Y$ over $X$ in $D$.  Since $A \sleq^* B$, any
  proper $0$-extension of $X$ must be contained in $C$.  Since
  $C \in \K_r^*$, we have $C \sleq^* D$ as required.

  The verifcations of  \ref{kax}.\ref{kax:1} through
  \ref{kax}.\ref{kax:8} are routine. 
\end{proof}

We now show that the generic has few intrinsic types by applying the
results of Section \ref{sec:limiting}.  Again, we let $\C$ be a universal domain for
the theory of the $(\K_r^*, \sleq^*)$-generic.  

Note that for $X \subseteq M$ with $M \models T_{(\K_r^*,
  \sleq^*)}$, if $(X,Y)$ is a minimal pair where $Y$ enlarges an
$S$-class of $X$, then by the second criterion for admissibility, $Y$
is algebraic over $X$ and hence contained in $M$.  Thus the only
intrinsic extensions of $X$ which can be external to a model will be
intrinsic in the sense of $(\K_r^+, \sleq_r)$.

\begin{lem}\label{ecl-dim}
  Let $M \models T_{(\K_r^*, \sleq^*)}$ and fix $X \finsubs M$.  For
  $X = X_0 \subseteq X_1 \subseteq \ldots \subseteq X_k$ a minimal
  chain over $X$ with $X_i \not \subseteq M$ for $i > 0$,
  $\delta(X_k / X) = 0 = \delta(X_k / M)$
\end{lem}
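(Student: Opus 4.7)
The plan is to induct on the length $k$ of the chain and use additivity $\delta(X_{i+1}/X)=\delta(X_{i+1}/X_i)+\delta(X_i/X)$ to reduce to showing $\delta(X_{i+1}/X_i)=0$ at each step. The base case $k=0$ is immediate, and the inductive hypothesis $\delta(X_j/X)=0$ for $j\le i$ yields $\delta(X_{i+1}/X)=\delta(X_{i+1}/X_i)$. By the remark preceding the lemma, the minimal pair $(X_i,X_{i+1})$ cannot enlarge an $S$-class that meets $M$: admissibility condition \ref{adm}.\ref{ad2} bounds $S$-classes in size, making any such extension algebraic over $X\subseteq M$ and hence contained in $\acl(M)=M$, which would violate $X_{i+1}\not\subseteq M$. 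So $(X_i,X_{i+1})$ is of $(\K_r^+,\sleq_r)$-intrinsic type, and thus $\delta(X_{i+1}/X_i)\le 0$.

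Next I would rule out the strict inequality. If $\delta(X_{i+1}/X_i)<0$, additivity gives $\delta(X_{i+1}/X)<0$, and I would invoke the standard Hrushovski-construction pigeonhole: were the type of $X_{i+1}\setminus X$ over $X$ non-algebraic, one could build arbitrarily many nearly disjoint realizations of the extension $(X,X_{i+1})$ in $\C$, whose free amalgam would have predimension $\delta(X)+n\delta(X_{i+1}/X)$ dropping below zero for large $n$, violating hereditary positivity in $\K_r^+$. Therefore $X_{i+1}\setminus X\subseteq \acl(X)\subseteq \acl(M)=M$, contradicting $X_{i+1}\not\subseteq M$. So $\delta(X_{i+1}/X_i)=0$, closing the induction and giving $\delta(X_k/X)=0$.

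For the second equality $\delta(X_k/M)=0$, the bound $\delta(X_k/M)\le \delta(X_k/X)=0$ comes from the monotonicity $\delta(B/AC)\le \delta(B/A)$, formally applied after replacing $M$ by a finite $M'\subseteq M$ containing $X$ together with the graph-neighbors in $M$ of $X_k\setminus M$ (so that $\delta(X_k/M)=\delta(X_k/M')$). For the reverse inequality, $\delta(X_k/M)<0$ would yield $\delta(X_k/M')<0$ for such an $M'$, and the same pigeonhole-plus-hereditary-positivity argument would force $X_k\setminus M\subseteq \acl(M')\subseteq M$, again contradicting $X_k\not\subseteq M$. The main obstacle throughout is leveraging additivity to push a hypothetical strictly negative step-predimension all the way back onto the base $X\subseteq M$, where the algebraic-closure argument finally bites against $M=\acl(M)$; the intermediate $X_i$'s drop out of the calculation precisely because the induction arranges $\delta(X_i/X)=0$, preventing the negativity from hiding in an intermediate "closed" layer.
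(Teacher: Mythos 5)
Your proof is correct and follows essentially the same route as the paper's: the paper's two-line argument is exactly the observation that $\delta(X_k/X)<0$ would make $X_k$ an algebraic extension, hence contained in $M=\acl(M)$, combined with the chain of inequalities $0\le\delta(X_k/M)\le\delta(X_k/X)$. Your inductive verification that each minimal-pair step is graph-theoretic (so contributes $\delta\le 0$) and your justification of $0\le\delta(X_k/M)$ via the same algebraicity argument merely make explicit what the paper leaves implicit.
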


\begin{proof}
  Note that if $\delta(X_k / X) < 0$ then $X_k$ is an
  algebraic extension and we have $X_k \subseteq M$.  Since $0 \leq
  \delta(X_k / M) \leq \delta(X_k / X)$, $\delta(X_k / M) = 0$ as
  well.  
\end{proof}

\begin{lem}\label{beta2}
  Let $M \models T_{(\K_r^*, \sleq^*)}$ and let $X \subseteq M$.  Then
  $$\ecl_M(X) = \bigcup \{\,  \ecl_M(X'):  X' \subseteq X,\  X' \text{
    $S$-homogeneous} \,\} $$
\end{lem}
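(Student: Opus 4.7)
The inclusion $\supseteq$ is immediate from monotonicity of the external closure operator. For $\subseteq$, fix $a \in \ecl_M(X)$. If $a \in X$ then $\{a\}$ is trivially $S$-homogeneous and suffices; otherwise the inductive construction of the external closure yields $\ecl_M(X) \cap M \subseteq X$, so $a \in \C \setminus M$. The plan is to exhibit a proper $0$-extension $(X_0, Y)$ with $a \in Y$, $X_0 \subseteq X$, and $Y \cap M = X_0$; once this is done, admissibility together with algebraic closure of $M$ in $\C$ will force $X_0$ to be $S$-homogeneous, and the minimal chain from $X_0$ to $Y$ will exhibit $a \in \ecl_M(X_0)$.

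The extraction of $(X_0, Y)$ proceeds by induction on the least $i$ with $a \in J_i$. In the base case ($i = 0$), a minimal pair $(X', B)$ with $X' \subseteq X$ and $a \in B$ biminimalizes to a biminimal pair $(B_0, B_0 \cup (B \setminus X'))$ with $B_0 \subseteq X'$. Any minimal pair is automatically indecomposable---a free-amalgam decomposition of $B$ over $B_0$ into two non-trivial sides would make each side a proper intermediate, forcing each to be strong over $B_0$ with positive relative predimension, contradicting that the total relative predimension $\delta(B/B_0) \leq 0$---so the biminimal pair is a proper $0$-extension. The condition $B \cap M \subseteq X'$ gives $(B \setminus X') \cap M = \emptyset$, hence $Y \cap M = B_0$. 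In the inductive step ($i \geq 1$), the minimal pair introducing $a$ has biminimal base $B_0' \subseteq J_i$; external elements of $B_0' \setminus X$ have $S$-homogeneous supports in $X$ by the inductive hypothesis, and composing these supports with $B_0' \cap X$ and the biminimal pair for $a$ assembles a proper $0$-extension $(X_0, Y)$ with $X_0 \subseteq X$ and $Y \cap M = X_0$.

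With such $(X_0, Y)$ in hand, suppose for contradiction that $X_0$ is not $S$-homogeneous. Condition \ref{adm}.\ref{ad3} applied to every finite admissible substructure of $\C$ containing $X_0 \cup Y$ bounds the number of pairwise $X_0$-disjoint copies of $Y$ by $\nu(X_0)$; a standard compactness argument using saturation of $\C$ then forces $\tp(Y/X_0)$ to be algebraic. Since $M$ is an elementary submodel of $\C$, we have $\acl(X_0) \subseteq \acl(M) = M$, so $Y \subseteq M$, contradicting $a \in Y \setminus M$. Therefore $X_0$ is $S$-homogeneous, and following the minimal chain from $X_0$ to $Y$ (whose intermediate sets $Z_i \subseteq Y$ satisfy $Z_i \cap M \subseteq Y \cap M = X_0$) shows $a \in \ecl_M(X_0)$.

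The main obstacle is the composition step in the inductive extraction: combining the $S$-homogeneous supports of external base-elements with the biminimal pair for $a$ into a single proper $0$-extension whose base sits in $X$. The composed extension must remain indecomposable and base-minimal, and the composed base $X_0$ may well fail to be $S$-homogeneous even when each component support is---yet this is precisely the configuration that admissibility rules out through the algebraic-closure argument above. Thus the tension between the combinatorial construction and the admissibility restriction is exactly what drives the conclusion: whenever $a$ is external to $M$, the combinatorial derivation of $a$ cannot require bases from multiple $S$-classes.
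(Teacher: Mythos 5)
Your endgame is the same as the paper's: a proper $0$-extension over a non-$S$-homogeneous base is, by admissibility condition \ref{adm}.\ref{ad3}, algebraic over that base, hence (since $M$ is algebraically closed in $\C$) contained in $M$, contradicting externality of $a$. But the step that makes this applicable --- producing a \emph{proper} $0$-extension $(X_0,Y)$ with $a\in Y$, $X_0\subseteq X$ and $Y\cap M=X_0$ --- is exactly where your argument stops. In the inductive step you ``assemble'' such an extension by composing the supports of the external elements of the base with the new biminimal pair, and you yourself flag this composition as the main obstacle without resolving it. The difficulty is not cosmetic: condition \ref{adm}.\ref{ad3} constrains only \emph{proper} $0$-extensions, so if the composed configuration fails base-minimality or decomposes as a free amalgam over $X_0$ --- which can easily happen, for instance when one of the component external closures contributes vertices that are not needed to support the new minimal pair --- then admissibility says nothing about it and the contradiction evaporates. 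You would need to trim to an indecomposable, base-minimal piece containing $a$ and check that its base still lies in $X$ and still meets $M$ exactly in that base; none of this is done. Two smaller slips: the definition of $\ecl_M$ guarantees only $B\cap M\subseteq X$, not $B\cap M\subseteq X'$, so $(B\setminus X')\cap M=\emptyset$ requires the Remark following that definition (or absorbing $B\cap M$ into the base first); and to have a $0$-extension rather than a negative one you need $\delta(B/B_0)=0$ exactly, which is Lemma \ref{ecl-dim}, not merely $\delta\le 0$.

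For comparison, the paper's proof avoids the composition entirely: it fixes the minimal chain $X=X_0\subseteq\cdots\subseteq X_k$ with $a\in X_k$ and $X_i\cap M=X$, uses Lemma \ref{ecl-dim} to see each link is a $0$-extension, observes that $(X_1^M,X_1)$ is a \emph{biminimal} $0$-extension (so properness comes for free) and concludes that $X_1^M$ is $S$-homogeneous, and then argues directly that every later trace $X_i^M$ must lie in the same $S$-class. Reworking your induction along those lines --- tracking the traces $X_i^M$ of the given chain rather than composing supports --- is the cleanest way to close the gap.
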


\begin{proof}
  One direction of the equality is obvious, so it suffices to show
  that if $a \in \ecl_M(X)$, then for some $X' \subseteq X$ with $X'$
  S-homogeneous, $a \in \ecl_M(X')$.  Let $X = X_0 \subseteq X_1
  \subseteq \ldots \subseteq X_k$ be a minimal chain with $a \in
  X_k$.  We have to show that $X_i \cap M = X_0$.

  By Lemma \ref{ecl-dim}, $(X_i, X_{i+1})$ is a $0$-extension for $i
  \geq 0$.  Letting $X'' = X_1^M$, we have that $(X'', X_1)$ is a
  biminimal $0$-extension. By property \ref{ads}.\ref{ads:3}, if $X''$
  is not $S$-homogeneous then $X_1$ is an algebraic extension of $X''$,
  contradicting that $X_1 \not \subseteq M$.  Letting $X'$ be the
  $S$-closure of $X''$, I claim that for $i > 1$ and $Y' = X_i^M$, $Y'
  \subseteq X'$.  If not, then $X', Y'$ are in different $S$-classes,
  and it is easy to see that $X_i$ is a proper $0$ extension of a non
  $S$-homegenous subset of $X'Y'$.  Thus
  by \ref{adm}.\ref{ad3}, there can only be finitely many copies of
  $X_i$ over $X'Y'$, contradicting the algebraic closure of $M$.
\end{proof}

\begin{prop}\label{hassupp}
  For $a \in \cl(M) \setminus M$, there is $X \finsubs M$ with $X$
  $S$-homogeneous and $a \in \ecl_M(X)$.
\end{prop}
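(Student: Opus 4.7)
The plan is to reduce the proposition to Lemma~\ref{beta2}. It suffices to exhibit a (not necessarily $S$-homogeneous) finite subset $X \finsubs M$ with $a \in \ecl_M(X)$, because Lemma~\ref{beta2} then furnishes an $S$-homogeneous $X' \subseteq X$ with $a \in \ecl_M(X')$, which is the required support.

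To produce $X$, I use the iterative description of the closure recalled in Section~\ref{sec:cltp}. Since $a \in \cl(M) \setminus M$, there is a chain $M_0 \subseteq M_1 \subseteq \cdots \subseteq M_k$ of finite sets in which $M_0 \finsubs M$, each $(M_i, M_{i+1})$ is a minimal pair, and $a \in M_k$. By truncating at the largest index $j$ with $M_j \subseteq M$ and relabelling, I may assume $M_i \not \subseteq M$ for every $i \geq 1$. Then set
\[
X := \bigcup_{i=0}^{k} (M_i \cap M),
\]
a finite subset of $M$ which contains $M_0$ and satisfies $M_i \cap M \subseteq X$ for every $i$.

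The same chain of minimal pairs will witness $a \in \ecl_M(X)$. By induction on $i$ I show that $M_i \subseteq J_{i-1}$, where $J_0, J_1, \ldots$ are the layers from the definition of the external closure. For $i = 1$, $(M_0, M_1)$ is a minimal pair with $M_0 \finsubs X$ and $M_1 \cap M \subseteq X$, so $M_1$ appears in the union defining $J_0$. For the inductive step, $M_{i-1} \subseteq J_{i-2}$ by the previous case, and $(M_{i-1}, M_i)$ continues to satisfy $M_i \cap M \subseteq X$, so $M_i \subseteq J_{i-1}$. In particular $a \in M_k \subseteq \ecl_M(X)$, and Lemma~\ref{beta2} completes the proof.

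I do not expect a substantive obstacle, since the existence of the chain is a standard feature of the closure operator and the transition to an $S$-homogeneous base is entirely packaged inside Lemma~\ref{beta2} (which itself rests on admissibility condition \ref{adm}.\ref{ad3}). The only point requiring mild care is the bookkeeping that $M_i$ lies in the precise layer $J_{i-1}$ rather than merely in the cumulative union $\bigcup_{j<i} J_j$; this is immediate because each $(M_{i-1}, M_i)$ was already a minimal pair in the original chain and $X$ was defined precisely to absorb the intersections of that chain with $M$.
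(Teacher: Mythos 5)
Your proposal is correct, but it takes a genuinely different route from the paper's, and the difference is worth spelling out. The paper fixes a finite $X \finsubs M$ with $a \in \cl(X)$ and proves the stronger statement that $a \in \ecl_M(X)$ for that very $X$: it argues by contradiction, using a predimension computation, that the minimal $0$-chain from $X$ to $a$, once it leaves $M$, can never re-enter $M$ outside of $X$ --- if it did, with $W$ the returning vertices and $V$ the external ones at the first offending step, one gets $\delta(X_j V/XW)<0$, which forces $X_j V \subseteq M$ by algebraic closedness of the model and contradicts the externality of the chain. You sidestep that computation entirely by enlarging the base to $X=\bigcup_i (M_i\cap M)$, so that any return of the chain to $M$ is absorbed into the base by construction, and you then verify $a\in\ecl_M(X)$ directly against the layered definition of the $J_i$; your induction $M_i\subseteq J_{i-1}$ is sound, since the definition of each layer only requires the base of the minimal pair to be a finite subset of the previous layer and $B\cap M\subseteq X$, both of which hold by fiat. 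Both proofs then finish identically via Lemma \ref{beta2}. What your shortcut loses is the ``no-return'' information --- that the originally chosen $X$ with $a\in\cl(X)$ already serves as a base for the external closure, which is the content of the Remark following the definition of $\ecl_M$ and is leaned on again in the subsequent proof that the support system has free closures --- so the paper's predimension argument is not redundant in the larger scheme, even though your argument fully suffices for the proposition as stated. What your version buys is economy and generality: it is purely combinatorial, makes no use of $\delta$ or of algebraic closedness of $M$, and would transfer verbatim to any full amalgamation class in which an analogue of Lemma \ref{beta2} is available.
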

\begin{proof}
  Let $a \in \cl(M) \setminus M$.  Fix $X \finsubs M$ with $a \in
  \cl(X)$.  We show that $a \in \ecl_M(X)$.  Let $X = X_0 \subseteq
  X_1 \subseteq \ldots \subseteq X_k$ be a minimal $0$-chain with $a \in
  X_k$.  Without loss, we may assume that $X_i \not \subseteq M$ for
  $i > 0$ (otherwise we can replace $X$ with $\bigcup X_j$ for $X_j
  \subseteq M$.)

  Suppose, by way of contradiction, that $a \not \in \ecl_M(X)$.  Then
  choose the least value $j$ so that $X_{j+1} \cap M \neq X$.  Let $W
  = (X_{j+1} \setminus X_j) \cap M, V = (X_{j+1} \setminus X_j)
  \setminus M$.  Then $\delta(WV/X_j) = 0$, so $\delta(V/X_j W) < 0$
  (by minimality).  Note that $\delta(X_j V / XW) = \delta(V / X_j X
  W) + \delta(X_j / XW) \leq \delta(V / X_j W) + \delta(X_j / X) <
  0$.  Thus $X_j V \subseteq M$, a contradiction which establishes
  that $a \in \ecl_M(X)$.  We can choose $X$ $S$-homogeneous and
  $S$-closed by Lemma \ref{beta2}.
\end{proof}

\begin{thm}
  $(\K_r^*, \sleq^*)$ has limiting supports.
\end{thm}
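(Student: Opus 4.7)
The plan is to let $\S$ be the set of $S$-equivalence classes of $M$, each of size at most $N$ by \ref{adm}.\ref{ad2}, so that $\S$ is bounded by $\aleph_0$. Existence of supports is immediate: Proposition \ref{hassupp} together with monotonicity of $\ecl_M$ in its base yields, for every $a \in \cl(M) \setminus M$, some $C \in \S$ with $a \in \ecl_M(C)$.

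The central computational tool underlying the three remaining conditions (unique supports, edge-closedness, free closures) is a uniform $\delta$-count. First I would verify the following: if $C \in \S$, $C = X_0 \subseteq X_1 \subseteq \cdots \subseteq X_k$ is a minimal $0$-chain with $X_i \cap M = C$ and $a \in X_k \setminus C$, and $m \in M \setminus C$ is joined by an edge to some vertex of $X_k \setminus C$, then
\[
\delta(X_k \cup \{m\}/C \cup \{m\}) = \delta(X_k/C) - r \cdot e(m, X_k \setminus C) \leq -r < 0.
\]
Using the standard fact (implicit throughout; cf.\ the argument of Proposition \ref{hassupp}) that an extension of a subset of $M$ with negative relative predimension is forced into $M$ by algebraic closure, this yields $X_k \cup \{m\} \subseteq M$ and hence the contradiction $a \in M$. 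Edge-closedness is then immediate by taking $m \in \bar m \setminus \supp_M(a)$ incident to $a$. For free closures, the same lemma applied to any hypothetical edge between $\ecl_M(C_1) \setminus C_1$ and $\ecl_M(C_2) \setminus C_2$ rules out such edges; combined with the set identity $\ecl_M(C_1 C_2) = \ecl_M(C_1) \cup \ecl_M(C_2)$ from Lemma \ref{beta2} (since $S$-homogeneous subsets of $C_1 \cup C_2$ lie wholly in one class), this gives the free amalgam decomposition $\ecl_M(C_1 C_2) = \ecl_M(C_1) \oplus_{C_1 C_2} \ecl_M(C_2)$.

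For uniqueness, the plan is to derive a contradiction under the assumption $a \in \ecl_M(C_1) \cap \ecl_M(C_2)$ with $C_1 \neq C_2$. Pick minimal $0$-chains $C_1 \subseteq X_1 \subseteq \cdots \subseteq X_k \ni a$ and $C_2 \subseteq Y_1 \subseteq \cdots \subseteq Y_l \ni a$ with $X_i \cap M = C_1$ and $Y_j \cap M = C_2$, and set $Z = X_k \cup Y_l$. Submodularity together with the same algebraic-closure principle (any negative sub-extension of $C_1 C_2$ in $Z$ would be trapped in $M \cap Z = C_1 C_2$) yields $\delta(Z/C_1 C_2) = 0$, so $(C_1 C_2, Z)$ is a $0$-extension. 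Taking $V$ to be the graph-connected component of $a$ in $Z \setminus C_1 C_2$ and $W = \{w \in C_1 C_2 : w \text{ is adjacent to } V\}$, the fact that minimal $0$-pairs in $\K_r^+$ produce extensions graph-connected to their bases forces $V$ to have neighbors in both $C_1$ and $C_2$, so $W$ is non-$S$-homogeneous and $(W, W \cup V)$ is an indecomposable $0$-extension containing $a$. After reducing to a proper $0$-extension $(W^*, Y^*)$ with $W^*$ non-$S$-homogeneous, admissibility \ref{adm}.\ref{ad3} bounds the pairwise disjoint copies of $Y^*$ over $W^*$ in $\C$ by $\nu(W^*)$; saturation of $\C$ then forces $\tp(Y^*/W^*)$ to be algebraic, placing $a \in \acl(W^*) \subseteq M$ for the desired contradiction.

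The main obstacle is arranging the reduction to a proper $0$-extension with non-$S$-homogeneous base: naively trimming $W$ to a minimal base can collapse it inside a single $S$-class, nullifying \ref{adm}.\ref{ad3}. Working around this will require a more careful selection of the sub-extension---for instance, by tracking the step of the combined chain at which $a$'s extension first forces simultaneous attachment to both $S$-classes and extracting a biminimal pair rooted at that crossing---or by inducting on chain length. Once this combinatorial point is settled, the remainder of the verification is a straightforward assembly of the support-system conditions from the $\delta$-count lemma and Lemma \ref{beta2}.
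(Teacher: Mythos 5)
Your proposal follows essentially the same route as the paper: supports are the $S$-homogeneous sets (you take the full $S$-classes, the paper all $S$-homogeneous subsets of $M$), existence comes from Proposition \ref{hassupp}, boundedness and edge-closedness from the class-size bound and a predimension count, and both free closures and uniqueness reduce, via Lemma \ref{beta2} and the algebraic closedness of $M$, to the admissibility bound on proper $0$-extensions over non-$S$-homogeneous bases. The one point you flag as unresolved --- extracting from $X_k \cup Y_l$ a proper $0$-extension whose base genuinely meets both classes --- is precisely the step the paper's own proof asserts without further detail, so your outline is as complete as the original.
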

\begin{proof}
  For a fixed model $M$, let $\T$ be
  $\{\, X \subseteq M : X \text{ is } S\text{-homogeneous}\,\}$.  Then
  $\T$ is clearly a support system bounded by $\omega$ which is
  edge-closed.  We show that $\T$ has free closures.  We need to show
  that for $X, Y$ supports,
     \[\label{suppeq1}\ecl_M(X) \ecl_M(Y) = \ecl_M(X) \bigoplus_{XY}
     \ecl_M(Y)\]
     Suppose that $X \neq Y$ are $S$-homogeneous subsets of $M$, $X = X_0 \subseteq X_1 \subseteq \ldots \subseteq X_k$ and
    $Y = Y_0 \subseteq Y_1 \subseteq \ldots \subseteq Y_l$ are minimal
    $0$-chains with $X_i, Y_i \not \subseteq M$ for $i > 0$.  Then if
    $(X_k \setminus  Y_{k-1})  \cap  (Y_l \setminus  Y_{l-1}) \neq
    \emptyset$, $X_k Y_l$ would contain a proper $0$-extension over
    some non-homogeneous subset of  $XY$, contradicting that there can
    be only finitely many such.
\end{proof}

\bigskip

We now show that the theory of the generic is strictly simple.

\begin{prop} For any rational $r \in (0,1)$, there is an $N_r$ so that
  for $N > N_r$, the theory of the $(\K_r^\ws, \sleq^*)$ generic has
  the independence property.
\end{prop}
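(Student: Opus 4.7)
The plan is to exhibit a single formula with the independence property by exploiting the absence of restrictions on proper $0$-extensions over $S$-homogeneous bases. Writing $r = p/q$ in lowest terms, I would first fix a biminimal $0$-extension $(A,B)$ in $\K_r^+$ with $k := |A| \geq \max(p,\lceil q/p \rceil)$: let $A$ be an independent set of size $k$ and let $B \setminus A$ consist of $p$ new vertices joined to $A$ by $q$ edges distributed so as to make $(A,B)$ biminimal (for $r = 1/2$, take $A = \{a_0,a_1\}$ and $B = A \cup \{b\}$ with $b$ adjacent to both $a_0,a_1$). Biminimality together with $\delta(B/A) = 0$ forces $(A,B)$ to be a proper $0$-extension, since any decomposition $B = AC \oplus_A AC'$ with $C,C'$ nonempty would force $\delta(AC/A), \delta(AC'/A) > 0$ (by minimality of the chain from $A$ to $B$) yet summing to $0$, a contradiction. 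Set $N_r := k$, so that for $N > N_r$ each $S$-class is permitted to contain an entire copy of $A$.

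Take $\phi(\bar x; \bar y) := \Delta_B(\bar y \bar x)$, where $\bar y$ has length $k$ and $\bar x$ has length $p$. For arbitrary $n$, invoke the extension property of the generic to choose $\bar b_1,\ldots,\bar b_n$ in $\C$ so that each $\bar b_i$ is an $S$-homogeneous copy of $A$, the $\bar b_i$ lie in pairwise distinct $S$-classes, the $\bar b_i$ are pairwise disjoint and non-adjacent, and their union is $\sleq^*$-strong in $\C$. For each $I \subseteq \{1,\ldots,n\}$, consider
\[ p_I(\bar x) := \{\,\phi(\bar x;\bar b_i) : i \in I\,\} \cup \{\,\neg\phi(\bar x;\bar b_i) : i \notin I\,\}. \]
I would establish the consistency of $p_I$ by constructing a finite witness $D_I$: adjoin to $\bar b_1 \cup \cdots \cup \bar b_n$ a single fresh $p$-tuple $\bar a_I$ placed in new singleton $S$-classes, with edges from $\bar a_I$ to $\bar b_i$ replicating the $B$-pattern exactly when $i \in I$ and no other edges involving $\bar a_I$. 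The choice $k \geq p$ makes a direct predimension computation show $D_I \in \K_r^+$; admissibility condition \ref{adm}.\ref{ad3} holds because every proper $0$-extension in $D_I$ with non-$S$-homogeneous base must have (some part of) $\bar a_I$ as its new vertices, so each such extension appears with multiplicity exactly one, well within the bound $\nu \geq 1$ guaranteed by $\mu$'s definition.

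Embedding $D_I$ into the generic and transferring to $\C$ (noting that the complete type of $\bar b_1,\ldots,\bar b_n$ is determined by their closure type via Proposition \ref{closed-elem}), each $p_I$ becomes finitely satisfiable over our fixed $\bar b_i$. By $\omega$-saturation of $\C$, each $p_I$ is realized by some $\bar a_I \in \C$. Hence $\phi$ shatters $(\bar b_1,\ldots,\bar b_n)$, and since $n$ was arbitrary, $\phi$ has the independence property.

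The main technical obstacle is the admissibility check for $D_I$: one must systematically enumerate the proper $0$-extensions $(X,Z)$ in $D_I$ with $X$ non-$S$-homogeneous and verify that $Z$ always introduces vertices from $\bar a_I$ (so the total count of copies is one). A secondary subtlety is transferring realizations from the finite admissible $D_I$ to $\C$ over the specific strong tuple $(\bar b_i)$; this is handled by combining the amalgamation property of $(\K_r^\ws,\sleq^*)$ with the fact, coming out of Section \ref{sec:cltp}, that the complete type of $\bar b_1,\ldots,\bar b_n$ depends only on its closure type.
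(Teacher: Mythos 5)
There is a genuine gap, and it is fatal to the construction rather than a fixable detail. Your formula $\phi(\bar x;\bar y)=\Delta_B(\bar y\bar x)$ is quantifier-free, so a realization $\bar a_I$ of $p_I$ must carry the actual $q$ edges of the $B$-pattern to \emph{each} $\bar b_i$ with $i\in I$. Then
\[
\delta\Bigl(\bar a_I\,\Big/\,\textstyle\bigcup_{i\in I}\bar b_i\Bigr)\;=\;p-r\,q\,|I|\;=\;p\,(1-|I|),
\]
which is strictly negative as soon as $|I|\ge 2$. Since you arrange $\bigcup_i\bar b_i$ to be $\sleq^*$-strong in $\C$, no such tuple can exist in $\C$ at all: a negative extension of a closed set contradicts closedness. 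Even if you drop the strongness requirement, the full shattering family cannot live in $\C$: the finite set $\bigcup_i\bar b_i\cup\bigcup_I\bar a_I$ would have predimension $nk+2^np-p\,n\,2^{n-1}$, which is negative for large $n$, contradicting that finite substructures of $\C$ lie in $\K_r^+$. So the types $p_I$ with $|I|\ge2$ are inconsistent, the claimed extension-property embedding of $D_I$ over $\bigcup\bar b_i$ is unavailable (it is not a strong extension), and $\phi$ does not shatter your parameters. Relatedly, your admissibility analysis of $D_I$ misidentifies the relevant pairs: $(\bigcup_{i\in I}\bar b_i,\bar a_I)$ is not a proper $0$-extension but an algebraic one, which is precisely the problem.

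The paper's proof avoids this by putting the alternation into an \emph{existential} clause rather than into edges. The base $A$ of the biminimal $0$-extension $(A,C)$ is split as $a\bar b$ with $\bar b$ $S$-homogeneous; the formula asserts that $x\bar b$ has the quantifier-free type of $A$ \emph{and extends to a copy of $C$}. The witness $x$ contributes (essentially) nothing negative to the predimension over the parameters, and the independence pattern is realized by freely amalgamating copies of $C$ (each of relative predimension $0$, hence cost-free and unboundedly repeatable --- this is exactly what intrinsic transcendentals buy) over $a\bar b_i$ for $i\in I$, while the omitting lemma (the Baldwin--Shi lemma of Section \ref{sec:background}) guarantees an embedding in which no copy of $C$ appears over $a\bar b_j$ for $j\notin I$. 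Your own opening sentence identifies the right mechanism --- the unrestricted proper $0$-extensions over $S$-homogeneous bases --- but the formula you then write down does not use it; to repair the argument you must make $\phi$ assert the existence of the $0$-extension, not the edge pattern of $B$ itself.
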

\begin{proof}
  \def\b{\bar b} 

  Lemma 3.2 of \cite{brody-laskowski} shows that for sufficiently
  large $N$ we can find a biminimal $0$-extension $(A,C)$ from $\K_r$
  with $|A| = N$.  By the definition of admissibility, we can expand
  $A$ to an $L^*$-structure so that for some $a \in A$,
  $A \setminus \{\, a \,\} $ is $S$-homogeneous.  Let $\phi(x, \b)$ be
  the $L^*$ formula stating that $x\b$ has the quantifier-free type of
  $A$ and $x \b$ extends to a copy of $C$.  Fix
  $M \models T_{(\K^*, \sleq^*)}$, and let $\b_1, \ldots, \b_n$ be
  $L^*$-isomorphic to $\b$.  By biminimality, $\b \sleq a C_{\b}$ for
  $C_{\b}$ a copy of $C$ over $a\b$.  For $\eta \in 2^n$, let
  $D_\eta = \bigoplus_{\eta(i)=1} (C_{\b_i} / a)$.  Then by Lemma 2.5,
  there is a strong embedding of $D_\eta$ over $M$, so that
  $\models \phi(a, \b_i)$ exactly when $\eta(i)=1$.
\end{proof}

This highlights that simply limiting the number of intrinsic types
over a model is not sufficient to guarantee the stability of the
theory.  We do have a simple theory, however.

\begin{thm}\label{simple}
  The theory of the $(\K_r^\ws, \sleq^*)$-generic is simple.
\end{thm}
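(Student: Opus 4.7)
The plan is to verify that $T = T_{(\K_r^*, \sleq^*)}$ satisfies the Kim--Pillay characterization of simplicity by exhibiting an independence relation $\ind$ on subsets of $\C$ and checking the required axioms. Following the pattern of predimension constructions, I would define $A \ind_C B$ to mean that $\cl(AC) \cap \cl(BC) = \cl(C)$ together with $\cl(ABC) = \cl(AC) \oplus_{\cl(C)} \cl(BC)$; that is, the closures of $A$ and $B$ over $C$ are freely amalgamated inside $\C$. Part of the set-up involves arguing, via Proposition \ref{closed-elem}, that this notion depends only on the types of $A, B, C$, so that it descends to a relation on complete types.

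Most of the Kim--Pillay axioms should be relatively routine. Invariance and symmetry are immediate from the symmetric definition; finite character follows from the fact that $\cl$ is built from finite minimal chains, so any failure of free amalgamation is witnessed on a finite set; transitivity comes from iterating the free-amalgam decomposition. For extension, given $A \ind_C B$ and $B' \supseteq B$, Corollary \ref{full-embed} together with the full amalgamation property of $(\K_r^*, \sleq^*)$ yields $A' \equiv_{CB} A$ whose closure is freely amalgamated with that of $CB'$ over $CB$. Local character should follow by a standard argument: the closure of a single realization interacts with $B$ only through a countable subset, which can be taken as the base over which $\tp(a/B)$ does not fork.

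The main obstacle, as is typical in this programme, is the independence theorem over a model. Given $M \models T$, sets $A_1 \ind_M A_2$, and types $p_i \in S(MA_i)$ extending a common $p \in S(M)$ non-forkingly, one must produce a realization $b$ with $b \equiv_{MA_i} b_i$ for some $b_i \models p_i$ and $b \ind_M A_1 A_2$. My approach is to split the realizations into their intrinsic and extrinsic parts over $M$. For the intrinsic parts, the preceding theorem yields a support system $\T$ consisting of $S$-homogeneous subsets with free closures and the edge-closed property, so by (the proof of) Proposition \ref{int-types-cond} the intrinsic portion of each $b_i$ is determined up to $M$-conjugacy by its finite support and its type over that support; the free-closures condition then allows us to place intrinsic pieces coming from different supports side by side without interference. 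For the extrinsic part, the full amalgamation property combined with Proposition \ref{closed-elem} reduces the problem to exhibiting an admissible free amalgam of $\cl(Mb_1A_1)$ and $\cl(Mb_2A_2)$ over $\cl(M)$ that realizes both types.

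The hard part will be ensuring that the resulting amalgam actually lies in $\K_r^*$; the potential obstruction is that freely amalgamating two closures could generate proper $0$-extensions over non-$S$-homogeneous bases in excess of the $\nu$-bound imposed by Definition \ref{adm}.\ref{ad3}. However, the counting argument of Proposition \ref{bounded1} was designed precisely to bound such extensions after free amalgamation, and should apply mutatis mutandis here to confirm admissibility. Once the independence theorem is in hand, simplicity of $T$ follows from the Kim--Pillay theorem; combined with the preceding proposition establishing the independence property, this yields the (strictly simple) behavior promised in the introduction.
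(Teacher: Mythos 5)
Your strategy is genuinely different from the paper's, and as written it has real gaps at exactly the points you defer. The paper does not go through Kim--Pillay at all: it invokes Casanovas' counting criterion (Theorem 2.8 of \cite{casanovas}), bounding the number of pairwise incompatible types over a set of size $\kappa$ by $\kappa^\omega + 2^\lambda$. The intrinsic types are already bounded by Proposition \ref{int-types-cond}; for the extrinsic ones the paper introduces ``closure-type incompatibility,'' maps each such type to a partial function into the set of intrinsic types over $\emptyset$, and applies an anti-chain lemma ($\fn_{\lambda^+}(\kappa,2^{\aleph_0})$, Lemma IV.7.5 of \cite{kunen}), finishing with a case analysis of the three ways two types can be incompatible. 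No independence relation is ever constructed.

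The gaps in your plan are these. First, the independence theorem over a model is the entire difficulty, and ``Proposition \ref{bounded1} should apply mutatis mutandis'' does not discharge it: that proposition bounds proper $0$-extensions in a free amalgam $B\oplus_A C$ of \emph{finite} admissible structures over a common \emph{strong} substructure $A$ with $A\sleq^* B$, $A\sleq^* C$. You would be amalgamating the (generally infinite, indeed $|\C|$-sized) closed sets $\cl(Mb_1A_1)$ and $\cl(Mb_2A_2)$ over $\cl(M)$, and you must show not merely that the result is admissible but that $b$ retains the same \emph{closure type} over $MA_1$ as $b_1$ and over $MA_2$ as $b_2$ --- the free amalgam can create new minimal pairs over $bA_1\bar m$ whose negations belong to $p_1$, and Proposition \ref{closed-elem} only converts equality of closure types into equality of types; it does not produce that equality for you. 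Second, local character is not ``standard'' here: because the class has intrinsic transcendentals, $\cl_\C(Ba)$ can have the same cardinality as $\C$, so the claim that $a$ ``interacts with $B$ only through a countable subset'' needs an argument, presumably routed through supports, that you have not given. Until the independence theorem and local character are actually proved, the proposal is a programme rather than a proof; the paper's counting argument sidesteps both issues entirely.
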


\begin{proof}
  Employing Theorem 2.8 of \cite{casanovas} we show that for
  $\kappa, \lambda$ infinite cardinals, if $W$ is any set of pairwise
  incompatible types of size at most $\lambda$ over a parameter set
  $A$ of size $\kappa$, then $|W| \leq \kappa^\omega + 2^\lambda$.
  Since we have already shown that there are at most $\kappa$
  intrinsic types over $A$, we may assume that $W$ consists of
  extrinsic types.

  Let us call two types $p,q$ over $A$ {\it closure-type incompatible}
  if there is some finite $\a \finsubs A$ and an intrinsic formula
  $\phi(x; \a)$ such that $p \vdash \phi$ and $q \vdash \neg \phi$.
  Imitating 3.3 in \cite{casanovas} we will show that if $X$ is a set
  of pairwise mutually closure-type incompatible types over $A$, with
  each $p \in X$ of power at most $\lambda$, then
  $|X| \leq 2^\lambda$.  Indeed, we can uniquely associate each
  $p \in X$ with a function $f_p$ from $A^{< \omega}$ to the set of
  intrinsic types over $\emptyset$ by
  $f_p(\a) = \{\, \phi(\x; \y) : p(x) \vdash \phi(x; \a) \,\}$.  Thus
  we can map $X$ to an anti-chain from
  $\fn_{\lambda^+}(\kappa, 2^{\aleph_0})$, the set of all partial maps
  from a cardinality $\lambda$ subset of $\kappa$ to a subset of
  $ 2^{\aleph_0}$.  Lemma IV.7.5 of \cite{kunen} then implies
  $|X| \leq \left( 2^{\aleph_0}\right)^\lambda = 2^\lambda$.

  We note that if $p,q$ are incompatible types, then one of the
  following must hold ( $a,b$ are chosen so that $a \models p$ and
  $b \models q$)
  \begin{enumerate}
  \item\label{simp1} $p$ and $q$ are closure-type incompatible over $A$
  \item\label{simp2} The relative predimension of $\cl(Aa)$ over $Aa$ is negative,
    as is the relative predimension of $\cl(Ab)$ over $Ab$, so that
    any realization of $p \cup q$ would have to be intrinsic over $A$.
    Specifically, $\delta( \cl(Aa) / Aa) < 0, \delta( \cl(Ab) / Ab) <
    0$ and $\delta(
    \cl(Aa) / Aa) + \delta( \cl(Ab) / Ab) \leq -1$.
  \item\label{simp3} The number of edges in from $a$ to $A$ and $b$ to
    $A$ would make any
    realization of $p \cup q$ intrinsic over $A$.  Specifically, this
    occurs when $1 - r|a^A \cup b^A| \leq 0$.
  \end{enumerate}

  For $p \in W$, let $A_p \finsubs A$ be a minimal subset of $A$ such
  that for $a \models p$, $a^A \subseteq A_p$ and every minimal chain
  over $Aa$ with negative relative dimension over $Aa$ will be over
  $A_p a$; let $F$ denote the map $p \mapsto A_p$.  Note that for
  $p,q \in W$, if $A_p = A_q$ then $p$ and $q$ must be incompatible
  for reason \eqref{simp1} (in this case, reason \eqref{simp2} reduces to
  \eqref{simp1}); thus for a fixed $A_p$ there are at most
  $ 2^\lambda$ types  $q \in W$ with $F(q) = A_p$.  Since there
  are at most $\kappa$ possible $A_p$, there are at most $\kappa
  2^\lambda = \kappa + 2^\lambda$ types in $W$.

\end{proof}

\bibliographystyle{amsplain}

\bibliography{biblio}

\end{document}